\date{}
\newtheorem{thm}{Theorem}[section]
\newtheorem{lem}[thm]{Lemma}
\newtheorem{prop}[thm]{Proposition}
\newtheorem{cor}[thm]{Corollary}
\theoremstyle{definition}
\newtheorem{ex}[thm]{\it Example}
\newtheorem{exs}[thm]{\it Examples}
\newtheorem{rem}[thm]{\it Remark}
\newtheorem{rems}[thm]{\it Remarks}
\newtheorem{disc}[thm]{\it Discussion}
\newtheorem{defin}[thm]{Definition}
\numberwithin{equation}{section}
\title{When the associated graded ring of a semigroup ring is Complete Intersection}
\begin{document}
\newcommand{\Hi}{\mathrm {Hilb}}
\newcommand{\card}{\mathrm {card}}
\newcommand{\ap}{\mathrm {Ap}}
\newcommand{\ord}{\mathrm {ord}}
\newcommand{\mapM}{\mathrm {maxAp_M}}
\newcommand{\map}{\mathrm {maxAp}}
\newcommand{\gr}{\mathrm{gr}}
\author{M. D'Anna\thanks{{\em email} mdanna@dmi.unict.it} \and V. Micale \and A. Sammartano}

\maketitle

\begin{abstract}
\noindent Let $(R,\mathfrak{m})$ be the  semigroup ring associated
to a numerical semigroup $S$. In this paper we study the property
of its associated graded ring $\gr_{\mathfrak{m}}(R)$ to be
Complete Intersection. In particular, we introduce and
characterize $\beta$-rectangular and $\gamma$-rectangular Ap\'ery
sets, which will be the fundamental concepts of the paper and will
provide, respectively, a sufficient condition and a
characterization for $\gr_{\mathfrak{m}}(R)$ to be Complete
Intersection. Then we use these notions to give four equivalent
conditions for $\gr_{\mathfrak{m}}(R)$ in order to be Complete
Intersection.
\medskip

\noindent MSC: 13A30; 13H10.
\end{abstract}

\section*{Introduction}

Let $(R,\mathfrak{m})$ be a Noetherian local ring with
$|R/\mathfrak{m}|=\infty$ and let
$\gr_{\mathfrak{m}}(R)=\oplus_{i\geq 0}\mathfrak{m}^i/
\mathfrak{m}^{i+1}$ be the associated graded ring of $R$ with
respect to $\mathfrak{m}$. The study of the properties of
$\gr_{\mathfrak{m}}(R)$ is a classical subject in local algebra,
not only in the general $d$-dimensional case, but also under
particular hypotheses (that allow to obtain more precise results).
One main problem in this context is to estimate the depth of
$\gr_{\mathfrak{m}}(R)$ and to understand when this ring is a
Cohen-Macaulay ring (see, e.g., \cite {RV}, \cite {Sally} and
\cite {VV}). In connection to this problem, it is natural to
investigate if $\gr_{\mathfrak{m}}(R)$ is a Buchsbaum ring (see
\cite{Go1}, \cite{Go2}), a Gorenstein ring or if it is Complete
Intersection (see \cite{HKU}).

In this paper we are interested in the properties of
$\gr_{\mathfrak{m}}(R)$, when $R$ is a numerical semigroup ring.
The study of numerical semigroup rings is motivated by their
connection to singularities of monomial curves and by the
possibility of translating algebraic properties into numerical
properties. However, even in this particular case, many
pathologies occur, hence these rings are also a great source of
interesting examples.

In the numerical semigroup case, the Cohen-Macaulayness of
$\gr_{\mathfrak{m}}(R)$ has been extensively studied (see, e.g.,
\cite {Ga}, \cite {MPT} and \cite {SH}); recently, different
authors studied the Buchsbaumness and the Gorensteinness of
$\gr_{\mathfrak{m}}(R)$ (see \cite{Br}, \cite{DMM}, \cite{DMS} and
\cite{Sa}).

In this paper we investigate when $\gr_{\mathfrak{m}}(R)$ is
Complete Intersection. About this problem not much is known (see
\cite {BDF}). When the embedding dimension of $R$ is small, it is
possible to list the generators of the defining ideals of $R$ and
of $\gr_{\mathfrak{m}}(R)$ (as it is done in \cite{H} and
\cite{SH} when the embedding dimension is $2$ or $3$), but, as
soon as the number of generators of $\mathfrak{m}$ increases, the
computations become too huge. On the other hand, a useful tool to
study the general case (when $\mathfrak{m}$ is $n$-generated) is
the so called Ap\'ery set of the semigroup. The properties of the
Ap\'ery set reveal much information on the Cohen-Macaulayness, the
Buchsbaumness and the Gorensteinness of $\gr_{\mathfrak{m}}(R)$.
In this paper, using the Ap\'ery set, we are able to characterize
when $\gr_{\mathfrak{m}}(R)$ is Complete Intersection.\\

The structure of the paper is the following. In Section 1 we fix
the notation and give some preliminaries about numerical
semigroups, semigroup rings associated to a numerical semigroup
and their associated graded ring. In particular, we qualitatively
discuss the form of the elements in the defining ideals of $R$, of
its quotient $\overline R$ modulo an element of minimal value and
of their associated graded rings (cf. Discussion \ref{monomials}).

In Section 2 we define two sets of integers $\beta_i$ and
$\gamma_i$, that yield to the definition and the characterization
of two classes of numerical semigroups: semigroups with
$\beta$-rectangular and $\gamma$-rectangular Ap\'ery set (cf.
Definition \ref{1}, Theorem \ref{d} and Theorem \ref{charact}).
These notions will provide, respectively, a sufficient condition
and a characterization for $\gr_{\mathfrak{m}}(R)$ to be Complete
Intersection. These classes are strictly connected and it is
useful to study both of them together. The idea in these
definitions is to use the ``shape" of the Ap\'ery set of the
semigroup in order to find integers that give information on the
degree and on the nature of the generators of the defining ideals
of $R$, of its quotient $\overline R$ modulo an element of minimal
value and of their associated graded rings.

In Section 3 we prove the main theorem of the paper (cf. Theorem
\ref{7}), that is the characterization of those numerical
semigroup rings whose associated graded ring is Complete
Intersection. More precisely, we give four equivalent conditions
for $\gr_{\mathfrak{m}}(R)$ in order to be Complete Intersection.
To obtain this result we need to deepen carefully the nature of
the elements of the defining ideals of $R$, of its quotient
$\overline R$ modulo an element of minimal value and of their
associated graded rings, using the integers $\beta_i$ and
$\gamma_i$ introduced in the previous section (cf. Discussion
\ref{monomials2} and Lemma \ref{JCI}). Finally, we give an
alternative proof of a sufficient condition for
$\gr_{\mathfrak{m}}(R)$ being Complete Intersection presented in
\cite {BF} (cf. Corollary \ref{7a}) and we briefly study the case
of embedding dimension $3$ (cf. Theorem \ref{3gen}).

The computations made for this paper are performed by using the GAP
system \cite{GAP} and, in particular, the NumericalSgps package
\cite{NS}.

\section{Preliminaries }

Let $\mathbb{N}$ denote the set of natural numbers, including $0$.
A \emph{numerical semigroup} is a submonoid $S$ of the monoid
$(\mathbb{N},+)$ with finite complement in it. Each numerical
semigroup $S$ has a natural partial ordering $\preceq$ where, for
every $s$ and $t$ in $S$, $s\preceq t$ if there is an element
$u\in S$ such that $t=s+u$. The set $\{g_i \}$ of the minimal
elements in the poset $(S \setminus \{0\}, \preceq)$ is called
\emph{minimal set of generators} for $S$; indeed all the elements
in $S$ are linear combinations, with coefficients in $\mathbb{N}$,
of minimal elements. Note that the set $\{g_i \}$  is finite since
for any $s \in S , \, s \ne 0$, we have  $g_i \not\equiv g_j\,
\pmod{s}$ if $i \ne j$. A numerical semigroup minimally generated
by $g_1 < g_2 < \ldots < g_\nu$ is denoted by $ \langle g_1, g_2,
\ldots, g_\nu \rangle$; the condition $| \mathbb{N} \setminus S| <
\infty$ is equivalent to $\gcd(g_1, \ldots, g_\nu)=1$.

There are several invariants associated to a numerical semigroup
$S$. The integer $m=g_1= \min \{ s \in S, \, s > 0\}$ is called
\emph{multiplicity}, while the minimal number of generators $\nu$
is called \emph{embedding dimension}; it is well known that $ \nu
\leq m$. Finally, the integer $f = \max \{ z \in \mathbb{Z},\, z
\notin S \}$ is called \emph{Frobenius number} of $S$.\\

Following the notation in \cite{BF}, we denote by
$\ap(S)=\{\omega_0,\dots,\omega_{m-1}\}$ the {\em Ap\'ery set} of
$S$ with respect to $m$, that is, the set of the smallest elements
in $S$ in each congruence class modulo $m$. More precisely,
$\omega_0=0$ and $\omega_i=\min\{s\in S\mid s\equiv i \pmod{m}\}$.
The largest element in the Ap\'ery set is always $f+m$.

Very often we will use the following result.

\begin{prop}[\cite{FGH}, Lemma 6]\label{1.1}
Let $t\in\ap(S)$ and $u \preceq t$, then $u\in\ap(S)$.
\end{prop}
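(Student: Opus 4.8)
The plan is to reduce the statement to the standard characterization of Apéry elements in terms of the multiplicity $m$: for an element $w \in S$, one has $w \in \ap(S)$ if and only if $w \in S$ and $w - m \notin S$. First I would establish this characterization. If $w = \omega_i$ is the smallest element of $S$ in the congruence class $i \pmod m$, then $w - m \equiv i \pmod m$ is a strictly smaller integer in that class, hence it cannot lie in $S$ by minimality of $\omega_i$; conversely, if $w \in S$ satisfies $w - m \notin S$, then no element $w - km$ with $k \geq 1$ can lie in $S$ (otherwise $w - m = (w - km) + (k-1)m$ would be forced into $S$ by additive closure), so $w$ is the minimal element of $S$ in its class and thus belongs to $\ap(S)$.

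Given this, the proof of the proposition is short. Let $t \in \ap(S)$ and $u \preceq t$; by definition of $\preceq$ there is $v \in S$ with $t = u + v$, and in particular $u \in S$. Suppose, for contradiction, that $u \notin \ap(S)$. By the characterization this means $u - m \in S$. But then $t - m = (u - m) + v$ is a sum of two elements of $S$, hence $t - m \in S$, contradicting the fact that $t \in \ap(S)$ forces $t - m \notin S$. Therefore $u - m \notin S$, and the characterization yields $u \in \ap(S)$.

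The argument is essentially immediate once the characterization is in hand, so there is no substantial obstacle; the only points requiring care are to note that $u \in S$ (which is built into the definition of $\preceq$) and to apply the additive closure of $S$ correctly. An alternative and equally short route avoids the characterization entirely: writing $t = u + v$ with $v \in S$, any element $u' \in S$ with $u' < u$ and $u' \equiv u \pmod m$ would give $u' + v \in S$ with $u' + v < t$ and $u' + v \equiv t \pmod m$, contradicting the minimality of $t$ in its class; hence $u$ is itself minimal in its class and lies in $\ap(S)$.
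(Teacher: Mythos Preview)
Your proof is correct. The paper does not supply its own argument for this proposition; it simply cites it as Lemma~6 of \cite{FGH}, so there is no proof in the paper to compare against. Both routes you give are standard and valid, and the characterization you establish at the outset is essentially the definition of $\ap(S)$ already stated in the paper.
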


A numerical semigroup $S$ is called {\em symmetric} if $f-x \notin
S$ implies that $x \in S$  for every integer $x$ (notice that the
converse is true for every numerical semigroup $S$).

\begin{prop}[\cite{RG}, Corollary 4.12]\label{3}
$S$ is symmetric if and only if $f+m$ is the unique maximal
element in $(\ap(S), \preceq)$.
\end{prop}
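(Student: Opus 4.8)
The plan is to build an order-reversing involution on the poset $(\ap(S),\preceq)$ and show that it is well defined exactly when $S$ is symmetric. Concretely, I would consider the map $\phi(\omega)=(f+m)-\omega$. Recall from the excerpt that $f+m$ is the largest element of $\ap(S)$, and note that $0=\omega_0$ is the unique minimal element of $(\ap(S),\preceq)$, since $0\preceq\omega$ for every $\omega\in\ap(S)$. The key point to establish is that $\phi$ maps $\ap(S)$ bijectively to itself in an order-reversing way; once this is known, the equivalence will follow because an order-reversing bijection of a finite poset interchanges minimal and maximal elements, and in particular must send the unique minimum $0$ to a unique maximum $\phi(0)=f+m$.

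For the forward direction ($S$ symmetric $\Rightarrow f+m$ unique maximal), I would take $\omega\in\ap(S)$, so that $\omega\in S$ and $\omega-m\notin S$. Symmetry applied to $x=\omega-m$ gives $f-(\omega-m)=(f+m)-\omega\in S$. Since $(f+m)-\omega\preceq f+m$ (because $f+m=\bigl((f+m)-\omega\bigr)+\omega$ with $\omega\in S$) and $f+m\in\ap(S)$, Proposition \ref{1.1} forces $(f+m)-\omega\in\ap(S)$. Thus $\phi$ is a well-defined involution of $\ap(S)$, and it is visibly order-reversing: if $\omega_1\preceq\omega_2$ then $\phi(\omega_2)\preceq\phi(\omega_1)$. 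Applying the observation above, $f+m=\phi(0)$ is the unique maximal element.

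For the converse ($f+m$ unique maximal $\Rightarrow S$ symmetric), I would first note that in a finite poset with a unique maximal element every element lies below it, so $\omega\preceq f+m$ for all $\omega\in\ap(S)$; hence $(f+m)-\omega\in S$, and Proposition \ref{1.1} again yields that $\phi$ is a bijection of $\ap(S)$. It remains to upgrade this combinatorial symmetry of $\ap(S)$ to the arithmetic symmetry of $S$ over all integers. I would argue residue class by class: given a gap $x\in\mathbb{N}\setminus S$ with $i\equiv x\pmod m$, one has $x<\omega_i$, and using $\phi(\omega_i)=(f+m)-\omega_i\in\ap(S)$ a direct comparison of $f-x$ with $\omega_{(f-i)\bmod m}=(f+m)-\omega_i$ shows $f-x\ge\omega_{(f-i)\bmod m}$, so $f-x\in S$; the case $x<0$ is immediate since then $f-x>f$. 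This gives symmetry.

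The main obstacle is exactly this last step of the converse: the hypothesis is purely order-theoretic (a unique maximal element), whereas symmetry is an arithmetic statement about \emph{every} integer $x$, not merely about Apéry elements. The bridge is to recognize $\phi$ as the correct involution and to carry the bijection of $\ap(S)$ down to the level of gaps via the residue-class decomposition; the forward implication, by contrast, is a short deduction from the definition of symmetry together with Proposition \ref{1.1}.
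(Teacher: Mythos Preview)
Your argument is correct. The paper does not actually supply a proof of this proposition; it is quoted verbatim as \cite{RG}, Corollary~4.12, so there is no ``paper's own proof'' to compare against. What you wrote is the standard proof: the involution $\phi(\omega)=(f+m)-\omega$ on $\ap(S)$ is well defined precisely when every $\omega$ satisfies $\omega\preceq f+m$, and the residue-class argument pushes this back to arbitrary integers. The only step you leave implicit is why $x<\omega_i$ forces $f-x\ge (f+m)-\omega_i$: this is because $x\equiv\omega_i\pmod m$ and $x<\omega_i$ together give $x\le\omega_i-m$, hence $f-x\ge f-\omega_i+m=(f+m)-\omega_i$. With that line made explicit, the proof is complete.
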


An {\em ideal} of a semigroup $S$ is a nonempty subset $H$ of $S$
such that $H+S\subseteq H$. The ideal $M=\{s\in S \mid s\neq 0\}$
is called  {\em maximal ideal of $S$}. It is straightforward to
see that, if $H$ and $L$ are ideals of $S$, then $H+L=\{h+l | \ h
\in H, \ l \in L\}$ and $kH(= H+\dots+H$, $k$ summands, for $k\geq
1$) are also ideals of $S$.
\\

Let $\Bbbk$ be an infinite field; the rings
$R=\Bbbk[[t^S]]=\Bbbk[[t^{g_{1}},\dots,t^{g_{\nu}}]]$ and
$R=\Bbbk[t^S]_{\mathfrak{m}}$ are called the {\em numerical semigroup
rings} associated to $S$. The ring $R$ is a one-dimensional local
domain, with maximal ideal
$\mathfrak{m}=(t^{g_{1}},\dots,t^{g_{\nu}})$ and quotient field
$\Bbbk((t))$ and $\Bbbk(t)$, respectively. In both cases the associated
graded ring of $R$ with respect to $\mathfrak{m}$,
$\gr_{\mathfrak{m}}(R)=\oplus_{i\geq 0}\mathfrak{m}^i/
\mathfrak{m}^{i+1}$, is the same. From now on, we will assume that
$R=\Bbbk[[t^S]]$, but the other case is perfectly analogous.

Let $(A,\mathfrak{n})$ be the local ring of formal power series
$\Bbbk[[x_1,\dots,x_{\nu}]]$ and let $\varphi:A\longrightarrow R$ be the
map defined by $\varphi(x_i)=t^{g_{i}}$. Clearly $R=A/I$ and
$\mathfrak{m}=\mathfrak{n}/I$, with $I=\ker\varphi$. Notice that
$I$ is a binomial ideal generated by all the elements of the form
\begin{equation}\tag{$*$}
x_1^{j_1}\cdot \dots \cdot x_{\nu}^{j_{\nu}}-x_1^{h_1}\cdot \dots
\cdot x_{\nu}^{h_{\nu}},
\end{equation}
with $j_1g_1+\dots+j_{\nu}g_{\nu}=
h_1g_1+\dots+h_{\nu}g_{\nu}$.

It is well known that this presentation induces a presentation of
the corresponding associated graded rings:
$$\psi:\gr_{\mathfrak{n}}(A)\longrightarrow
\gr_{\mathfrak{m}}(R),$$ where the kernel is the initial ideal of $I$,
i.e. the ideal $I^*$ generated by the initial forms of the
elements of $I$; hence $\gr_{\mathfrak{m}}(R) \cong
\gr_{\mathfrak{n}}(A)/I^* \cong \Bbbk[x_1,\dots,x_{\nu}]/I^*$
canonically.

Notice that $I^*$ is an homogenous ideal generated by all the
monomials of the form $x_1^{j_1}\cdot \dots \cdot
x_{\nu}^{j_{\nu}}$ coming from a binomial $(*)$ for which
$j_1+\dots+j_{\nu}< h_1+\dots+h_{\nu}$ and by all the binomials
$(*)$ such that $j_1+\dots+j_{\nu} = h_1+\dots+h_{\nu}$.
%
%
%

We denote by $\mu(\cdot)$ the minimal number of generators of an
ideal. The ring $R$ is Complete Intersection if $\mu(I)=\nu-1$ and
the associated graded ring $\gr_{\mathfrak{m}}(R)$ is {\em
Complete Intersection} if $\mu(I^*)=\nu-1$. It is well known that,
if $\gr_{\mathfrak{m}}(R)$ is Complete Intersection, then also $R$
is Complete Intersection.

Numerical semigroups for which $R$ is Complete Intersection are
well known (and they are called Complete Intersection numerical
semigroups; for the de\-finition see, e.g., \cite{RG}). We are
interested in studying when $\gr_{\mathfrak{m}}(R)$ is Complete
Intersection.
Let $\overline{R}=R/(t^m)$ and $\overline{G}=
\gr_{\overline{\mathfrak m}} (\overline{R})$, where
${\overline{\mathfrak m}}$ is the maximal ideal of $\overline{R}$.

\begin{rem}\label{4}
In our hypotheses, it is clear that $\gr_\mathfrak{m}(R)$ is
Complete Intersection if and only if $\overline{G}$ is Complete
Intersection and $\gr_\mathfrak{m}(R)$ is Cohen-Macaulay (i.e., as
it is proved in \cite{Ga}, $(t{^m})^*\in \mathfrak m/\mathfrak
m^2$ is not a zero-divisor in $\gr_\mathfrak{m}(R)$).
In fact, in
case $\gr_\mathfrak{m}(R)$ is Cohen-Macaulay, from the isomorphism $R/(t^m) \cong \overline{R}$ we
get the isomorphism $\gr_\mathfrak{m}(R)/(({t^m})^*) \cong
\overline{G}$. We notice also that, in general, there is a
surjective homomorphism of graded rings $\gr_\mathfrak{m}(R)
\rightarrow \overline{G}$, whose kernel is the initial ideal of
$(t^m)$ in $\gr_\mathfrak{m}(R)$.
\end{rem}

\begin{rem}\label{1'}
We note that $\overline{R}= \langle \overline{t^{\omega_i}} \, |
\, \omega_i \in \ap(S)\rangle_\Bbbk$, since
\[ \overline{t^s}=\overline{0}\mbox{ in }\overline{R} \Longleftrightarrow t^s \in (t^m)
\Longleftrightarrow s-m \in S. \] We also have
$\overline{G}=\overline{R}$ as $\Bbbk$-vector spaces (but not as
rings) since a nonzero monomial in $\overline{R}$ is still nonzero
in $\overline{G}$.
\end{rem}

\begin{disc}\label{monomials}
Recalling the isomorphism $R \cong \Bbb\Bbbk[[x_1,x_2,\dots, x_{\nu}]]/I$,
where $\overline {x_i}$ corresponds to $t^{g_{i}}$ (hence
$\overline {x_1}$ corresponds to $t^m$), we have the isomorphism
$\overline R \cong \Bbb\Bbbk[[x_2,x_3,\dots, x_{\nu}]]/H$.
More precisely $H$ is the kernel of the homomorphism defined by
$x_i \mapsto \overline{t^{g_{i}}}$ and it is generated by all the
binomials of the form
\begin{equation}\tag{$**$}
x_2^{j_2}\cdot \dots \cdot x_{\nu}^{j_{\nu}}-x_2^{h_2}\cdot \dots
\cdot x_{\nu}^{h_{\nu}},
\end{equation}
with $j_2g_2+\dots+j_{\nu}g_{\nu}=
h_2g_2+\dots+h_{\nu}g_{\nu} \in \ap(S)$, and by all the monomials
of the form
\begin{equation}\tag{$***$}
x_2^{j_2}\cdot \dots \cdot x_{\nu}^{j_{\nu}},
\end{equation}
where
$j_2g_2+\dots+j_{\nu}g_{\nu} \notin \ap(S)$.

It follows that $\overline{G} =\gr_{\overline{\mathfrak m}}
(\overline{R}) \cong \Bbb\Bbbk[x_2, x_3, \dots, x_\nu] / J$, where $J$ is
the kernel of the homomorphism defined by $x_i \mapsto
\overline{t^{g_{i}}}$ (where now $\overline{t^{g_{i}}}$ is viewed
as an element of $\overline G$) and it is the initial ideal of
$H$. Hence $J$ is a binomial ideal generated by all the the
binomials of the form
\begin{equation}\tag{$+$}
x_2^{j_2}\cdot \dots \cdot x_{\nu}^{j_{\nu}}-x_2^{h_2}\cdot \dots
\cdot x_{\nu}^{h_{\nu}},
\end{equation}
with $j_2g_2+\dots+j_{\nu}g_{\nu}=
h_2g_2+\dots+h_{\nu}g_{\nu} \in \ap(S)$ and $j_1+\dots+j_{\nu} =
h_1+\dots+h_{\nu}$, and by all the monomials of the form
\begin{equation}\tag{$++$}
x_2^{j_2}\cdot \dots \cdot x_{\nu}^{j_{\nu}},
\end{equation}
where either
$j_2g_2+\dots+j_{\nu}g_{\nu} \notin \ap(S)$ or
$j_2g_2+\dots+j_{\nu}g_{\nu} \in \ap(S)$ and there exist $h_2,
\dots, h_{\nu} \in \mathbb N$, such that
$j_2g_2+\dots+j_{\nu}g_{\nu}= h_2g_2+\dots+h_{\nu}g_{\nu}$ and
$j_2+\dots+j_{\nu}< h_2+\dots+h_{\nu}$.

In particular, let $j=j_2+\dots+j_{\nu}$; then a binomial of the
form $(+)$ is not necessary as generator of $J$, if
$x_2^{j_2}\cdot \dots \cdot x_{\nu}^{j_{\nu}}\in (x_2, \dots ,
x_{\nu})^{j+1}$.
Furthermore, the monomial $x_2^{j_2}\cdot \dots \cdot
x_{\nu}^{j_{\nu}}\in (x_2, \dots , x_{\nu})^j \setminus (x_2,
\dots , x_{\nu})^{j+1}$, such that $j_2g_2+\dots+j_{\nu}g_{\nu}
\in \ap(S)$, does not belong to $J$.

Finally, since the Krull dimension of $\overline{G}$ is $0$, we
must have $\mu(J)\geq \nu -1$. Hence $\overline{G}$ is Complete
Intersection if and only if $ \mu(J)=\nu-1$.
\end{disc}

From the previous remarks and discussion, it is clear why, to
study the Complete Intersection property for $\gr_\mathfrak{m}(R)$
it is necessary to study the Ap\'ery set of $S$.
When
$\overline {G}$ is Complete Intersection, its Hilbert function is
completely determined by the degree of the generators of $J$ and
its dimension as $\Bbbk$-vector space is the product of these degrees.
Hence we will have to determine these degrees using numerical
conditions; moreover, since monomials in $\overline G$ correspond
bijectively to elements of the Apery set, we will have to
determine the ``shape" of $\ap(S)$ corresponding to $\overline G$
Complete Intersection.

\section{
$\beta$- and $\gamma$-rectangular Ap\'ery Sets}

Within this section we introduce
two sets of integers and two corresponding classes of numerical
semigroups, defined via the shape of the Ap\'ery set. The first
class will provide a sufficient condition for $\overline G$ to be
complete intersection, while the second one will give a
characterization.
\\

Given a numerical semigroup $S= \langle g_1, g_2, \ldots, g_\nu
\rangle$ and $s,\,t \in S$,
we recall that
$s \preceq t$ if there exists $u \in S$ such that $s+u=t$.
Now we want to define another partial ordering on $S$ as in \cite{Br}.
If $s\in S$ and $M=S\setminus\{0\}$ then there exists a unique $h\in \mathbb{N}$
such that $s\in hM\setminus (h+1)M$;
this integer is defined as
the {\em order} of $s$ and we will write $\ord (s)=h$.
Given $s,t\in S$, we say that $s\preceq_M t$ if there
exists $u \in S$ such that $s+u=t$ (hence $s\preceq t$) and $\ord
(s)+\ord (u)=\ord(t)$.
The partial order $\preceq_M$ is particularly helpful in the study of the associated graded ring.

The sets of maximal elements of $\ap(S)$ with respect to $\preceq$
and $\preceq_M$ are denoted with $\map(S)$ and $\mapM(S)$,
respectively.

\begin{rem}
We note that $\map(S) \subseteq \mapM(S)$ and the inclusion can be strict.
For
example, let $S=\langle 8,9,15\rangle$. The only maximal ele\-ment
in $\ap(S)$ with respect to $\preceq$ is $45$. Anyway
$\mapM(S)=\left\{30,45\right\}$. Note that
$\ord(45)=5>3=\ord(30)+\ord(15)$.
\end{rem}

A numerical semigroup $S$ is called $M$-{\em pure} if every
element in $\mapM(S)$ has the same order.
$M$-pure symmetric semigroups are characterized in a similar way to symmetric semigroups:
\begin{prop}[\cite{Br}, Proposition 3.7]\label{crede}
A semigroup $S$ is $M$-pure symmetric if and only if $\omega \preceq_M f+m$, for every $\omega \in
\ap(S)$.
\end{prop}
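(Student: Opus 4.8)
The plan is to prove both implications directly, relying on three elementary facts about the order function. First, $\preceq_M$ refines $\preceq$: if $s \preceq_M t$ then in particular $s \preceq t$. Second, $\ord$ is super-additive, that is $\ord(a+b) \geq \ord(a)+\ord(b)$ for all $a,b \in S$, since $a \in \ord(a)M$ and $b \in \ord(b)M$ force $a+b \in (\ord(a)+\ord(b))M$. Third, $\ord(u)=0$ if and only if $u=0$. I would also record at the outset that $f+m$, being the largest element of $\ap(S)$, is maximal with respect to both $\preceq$ and $\preceq_M$, so that $f+m \in \mapM(S)$.

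For the implication ($\Leftarrow$), assume $\omega \preceq_M f+m$ for every $\omega \in \ap(S)$. Since $\preceq_M$ refines $\preceq$, this gives $\omega \preceq f+m$ for all $\omega$, so $f+m$ is the unique maximal element of $(\ap(S),\preceq)$; by Proposition \ref{3} the semigroup $S$ is symmetric. For $M$-purity, take any $\omega \in \mapM(S)$: by hypothesis $\omega \preceq_M f+m$, and since $\omega$ is $\preceq_M$-maximal this forces $\omega = f+m$. Hence $\mapM(S)=\{f+m\}$, and a one-element set trivially satisfies the requirement that all its members have equal order, so $S$ is $M$-pure.

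For the implication ($\Rightarrow$), assume $S$ is $M$-pure symmetric, and set $b=\ord(f+m)$; by $M$-purity every element of $\mapM(S)$ has order $b$. The heart of the argument is the claim that $\mapM(S)=\{f+m\}$. Let $\omega' \in \mapM(S)$. By symmetry and Proposition \ref{3}, $f+m$ is the unique $\preceq$-maximal element, so $\omega' \preceq f+m$ and there is $u \in S$ with $\omega' + u = f+m$. Super-additivity then yields $b = \ord(f+m) \geq \ord(\omega')+\ord(u) = b + \ord(u)$, whence $\ord(u)=0$ and therefore $u=0$, i.e.\ $\omega'=f+m$. This proves the claim. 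Finally, for an arbitrary $\omega \in \ap(S)$, since $\ap(S)$ is finite and $\preceq_M$ is transitive one may climb a strictly increasing $\preceq_M$-chain within $\ap(S)$ from $\omega$ to a $\preceq_M$-maximal element, which must be $f+m$; thus $\omega \preceq_M f+m$, as desired.

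I expect the only genuinely delicate point to be the reduction $\mapM(S)=\{f+m\}$ in the forward direction: one must combine symmetry, which places each maximal element below $f+m$ in the coarse order $\preceq$, with $M$-purity and super-additivity, which together force the connecting element $u$ to vanish. The backward direction, the super-additivity of $\ord$, and the finiteness argument that every element sits below some $\preceq_M$-maximal element are all routine and pose no obstacle.
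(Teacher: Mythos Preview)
Your proof is correct. The paper does not supply its own proof of this proposition; it is quoted from \cite{Br} (Bryant, Proposition 3.7) and used as a black box throughout. So there is no argument in the present paper to compare against.

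For completeness: both directions of your argument are sound. The backward direction is immediate once one notes that $\preceq_M$ refines $\preceq$, and your reduction in the forward direction---combining symmetry (to get $\omega' \preceq f+m$), $M$-purity (to get $\ord(\omega')=\ord(f+m)$), and super-additivity of $\ord$ (to force the connecting element $u$ to have order zero)---is exactly the natural route. The final ``chain-climbing'' step is justified because $\preceq_M$ is indeed a partial order on the finite set $\ap(S)$: reflexivity and antisymmetry are clear, and transitivity follows from super-additivity of $\ord$ together with the squeeze $\ord(s)+\ord(u+v)\leq \ord(w)=\ord(s)+\ord(u)+\ord(v)\leq \ord(s)+\ord(u+v)$.
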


Every element $s \in S$ can be written, not necessarily in a
unique way,
 as $s=\lambda_1g_1+\dots +\lambda_{\nu}g_{\nu}$;
we call this combination of the generators a {\em representation}
of $s$. Throughout the paper, we call ``representation" both the
expression  $s=\lambda_1g_1+\dots +\lambda_{\nu}g_{\nu}$ and the
tuple $(\lambda_1,\lambda_2, \ldots, \lambda_\nu)$. We say that an
element $s\in S$ has a {\em unique representation} if it can be
written in a unique way as a linear combination of $g_1, g_2,
\ldots, g_{\nu}$. Notice that, by definition of Ap\'ery set,
 an
element $\omega \in \ap(S)$ can have only representations where
$g_1$ does not appear.

A representation of an element $s \in S$ as
$s=\lambda_1g_1+\lambda _2g_2+\dots+\lambda_\nu g_\nu$ is called
{\em maximal} if $\lambda_1+\lambda _2+\dots+\lambda_\nu=\ord (s)$.
This kind of representations and, in particular, the number of
maximal representations of elements of $S$ have been studied in \cite{BHJ}.
We say that $\ap(S)$ is of
{\em unique maximal expression} if every $\omega \in \ap(S)$ has a
unique maximal representation.
\\

Let us  define now the following integers, for every $i=2 \dots,
\nu$:
\begin{eqnarray*}
\beta_i &=&\max\{ h \in \mathbb{N} \, | \, hg_i \in \ap(S) \text{ and } \ord(h g_i)=h\};\\
\gamma_i &=&\max\{ h \in \mathbb{N} \, | \, hg_i \in \ap(S), \, \ord(h g_i)=h \text{ and }\\
& & \hspace*{2.3 cm} hg_i \text{ has a unique maximal representation}\}.
\end{eqnarray*}
Notice that,
in the second definition,
from $\ord(hg_i)=h$
it follows that $h g_i$
must be the unique maximal representation.
The following proposition is straightforward:

\begin{prop}\label{sco}
For each index $i=2, \ldots, \nu,$ we have $\gamma_i \leq
\beta_i$.
\end{prop}

\begin{exs}\label{liberi}
Let $S=\langle 8,10,15\rangle$; so $\ap(S)= \{ 0, 10, 15, 20, 25,
30, 35, 45\}$. Since $30, 45 \in \ap(S)$ and $40, 60 \notin
\ap(S)$ then $\beta_2, \beta_3 \leq 3$. We have the double
representation $30= 3 \cdot 10 = 2 \cdot 15$, implying that
$\ord(2\cdot 15)=3
>2$; hence $\beta_2=3$ and $\beta_3=1$. It is easy to check that
every element in $\ap(S)$ has a unique maximal representation,
thus $\gamma_2=\beta_2=3$ and $\gamma_3=\beta_3=1$.

Let $S=\langle 7,9,10,11,12\rangle$; we have $\ap(S)= \{ 0, 9, 10,
11, 12, 20, 22 \}$. Analo\-gously to the previous example, we have
$\beta_2=1, \, \beta_3=2, \, \beta_4=2, \beta_5=1$. The only
double representations in $\ap(S)$ are $20=9+11=10+10$ and
$22=10+12=11+11$, and they are all maximal. In particular, it
follows $\gamma_2=\gamma_3=\gamma_4=\gamma_5=1$.
\end{exs}

In correspondence to the two families of numbers introduced above,
we can define the following two sets:
\begin{eqnarray*}
B &=& \Big\{\sum_{i=2}^{\nu} \lambda_i g_i \, | \, 0 \leq \lambda_i \leq \beta_i \Big\};\\
\Gamma &=& \Big\{\sum_{i=2}^{\nu} \lambda_i g_i \, | \, 0 \leq \lambda_i \leq \gamma_i \Big\}.
\end{eqnarray*}
The  sets $B,\,\Gamma,$ consist of the  elements of $S$
representable via a tuple $(\lambda_2,\ldots, \lambda_\nu)$
belonging to the hyper-rectangle of $\mathbb{N}^{\nu-1}$ whose
vertices are respectively given by $\beta_i$ and $\gamma_i$. By
Proposition \ref{sco}, it  follows that $\Gamma \subseteq B$.
Notice also that, as can be easily seen by the previous examples,
since elements in $B$ and $\Gamma$ can have more than one
representation, $|B| \leq \prod_{i=2}^{\nu}(\beta_i+1)$ and
$|\Gamma| \leq \prod_{i=2}^{\nu}(\gamma_i+1)$.
\\

It is natural to ask  how the  sets $B$ and $\Gamma,$ are related
to the Ap\'ery set. The inclusion $\ap(S)\subseteq B$ is always
true, as we can deduce from the next lemma:


\begin{lem}\label{aurora}
Let $\omega \in \ap(S)$ and let $\omega= \sum_{i=2}^\nu \lambda_i g_i$ be a maximal representation.
Then $ \lambda_i \leq \beta_i$ for each $i$.
\end{lem}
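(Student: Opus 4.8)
The plan is to fix an index $i \in \{2, \dots, \nu\}$ and reduce the statement to the single claim that $\lambda_i g_i$ is itself an element of $\ap(S)$ satisfying $\ord(\lambda_i g_i) = \lambda_i$. Indeed, once this is shown, the integer $h = \lambda_i$ meets both conditions in the definition of $\beta_i$, so $\lambda_i \leq \beta_i$ by maximality of $\beta_i$. The case $\lambda_i = 0$ is trivial, since $\beta_i \geq 0$ (take $h=0$, as $0 \in \ap(S)$ with $\ord(0)=0$), so I may assume $\lambda_i \geq 1$.

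For the membership, I would write $\omega = \lambda_i g_i + u$ with $u = \sum_{j \neq i} \lambda_j g_j \in S$, so that $\lambda_i g_i \preceq \omega$; since $\omega \in \ap(S)$, Proposition \ref{1.1} immediately yields $\lambda_i g_i \in \ap(S)$. The heart of the argument is then the order computation. I would first record that $\ord(s)$ equals the maximal value of $\sum_j \mu_j$ taken over all representations $s = \sum_j \mu_j g_j$, because $s \in hM$ holds precisely when $s$ is a sum of $h$ elements of $M$, i.e. of at least $h$ generators. In particular the representation $\lambda_i g_i = \lambda_i \cdot g_i$ gives $\ord(\lambda_i g_i) \geq \lambda_i$. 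For the reverse inequality I argue by contradiction: if $\ord(\lambda_i g_i) = H > \lambda_i$, choose a representation $\lambda_i g_i = \sum_j \mu_j g_j$ realizing this order, so that $\sum_j \mu_j = H$. Substituting this block into $\omega = \lambda_i g_i + \sum_{j \neq i} \lambda_j g_j$ produces a representation of $\omega$ with $H + \sum_{j\neq i}\lambda_j = H + (\ord(\omega) - \lambda_i) > \ord(\omega)$ summands, contradicting the fact that $\ord(\omega)$ is the maximal number of summands in any representation of $\omega$ (here I use the maximality hypothesis $\sum_{j} \lambda_j = \ord(\omega)$). Hence $\ord(\lambda_i g_i) = \lambda_i$, and the lemma follows.

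The only delicate point is this substitution step, so I expect the main (though mild) obstacle to be phrasing it cleanly, in particular making explicit the characterization of $\ord$ as the maximal number of generators appearing in a representation, which is what guarantees that replacing the block $\lambda_i g_i$ by a strictly longer representation of the same value strictly increases the total summand count of $\omega$. One may additionally observe that, since $\lambda_i g_i \in \ap(S)$, no representation of it can involve $g_1$, but this fact is not actually needed for the counting argument.
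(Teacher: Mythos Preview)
Your proof is correct and follows essentially the same route as the paper: both arguments use Proposition~\ref{1.1} to place $\lambda_i g_i$ in $\ap(S)$, and both hinge on the observation that if $\ord(\lambda_i g_i)>\lambda_i$ then substituting a longer representation of $\lambda_i g_i$ into $\omega$ would contradict the maximality of $\sum_j \lambda_j$. The only cosmetic difference is that you first establish $\ord(\lambda_i g_i)=\lambda_i$ and then invoke the definition of $\beta_i$, whereas the paper assumes $\lambda_i>\beta_i$, infers $\ord(\lambda_i g_i)>\lambda_i$ from that definition, and then runs the same substitution to reach a contradiction.
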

\begin{proof}
By Lemma \ref{1.1} we have $\lambda_i g_i \in \ap(S)$. If there is
a index $i$ such that $\beta_i < \lambda_i$, then $\ord(\lambda_i
g_i) > \lambda_i$, by definitions of $\beta_i$. It follows that
$\ord(\sum_{i=2}^\nu \lambda_i g_i) > \sum_{i=2}^\nu \lambda_i$
and thus it is not a maximal representation of $\omega$;
contradiction.
\end{proof}

Notice also that an integer in $B$ could have more different
maximal representations, as can be seen in the same example as
above $S=\langle 7,9,10,11,12\rangle$.

We want to show that actually the stronger inclusion $\ap(S)
\subseteq \Gamma$ holds; this is a consequence of the following
useful result, which is somehow analogous to Lemma \ref{aurora}.
In what follows, we denote with \textsf{lex} and \textsf{grlex}
respectively the usual lexicographic order and graded
lexicographic order in $\mathbb{N}^{\nu-1}$.

\begin{lem}\label{dolls}
Let $\omega \in \ap(S)$ and set
\[ \mathcal{R}= \Big\{ (\lambda_2, \ldots, \lambda_{\nu}) \in \mathbb{N}^{\nu-1}\, \Big|\,
 \sum_{i=2}^{\nu} \lambda_i g_i=\omega \text{ and } \sum_{i=2}^{\nu} \lambda_i = \ord(\omega) \Big\}\]
i.e. the set of maximal representations of $w$.
Let $(\mu_2, \ldots, \mu_\nu)$ be the maximum in  $\mathcal{R}$ with respect to \textsf{lex},
then we have $\mu_i \leq \gamma_i$ for each $i$.
\end{lem}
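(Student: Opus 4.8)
The plan is to argue by contradiction, mimicking the proof of Lemma~\ref{aurora} but extracting extra information from the \textsf{lex}-maximality of $(\mu_2,\dots,\mu_\nu)$. By Lemma~\ref{aurora} we already know $\mu_i\le\beta_i$ for every $i$, so suppose, for contradiction, that $\mu_i>\gamma_i$ for some index $i$. First I would record two facts about the ``slice'' $\mu_i g_i$. Since $\mu_i g_i\preceq\omega\in\ap(S)$, Proposition~\ref{1.1} gives $\mu_i g_i\in\ap(S)$; and writing $\omega=\mu_i g_i+\omega'$ with $\omega'=\sum_{k\ne i}\mu_k g_k$ and using that $\ord$ is super-additive on $S$ together with $\ord(\mu_i g_i)\ge\mu_i$ and $\ord(\omega')\ge\sum_{k\ne i}\mu_k$, the equality $\sum_k\mu_k=\ord(\omega)$ forces $\ord(\mu_i g_i)=\mu_i$. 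Now $\mu_i g_i\in\ap(S)$ has order $\mu_i>\gamma_i$, so by the very definition of $\gamma_i$ the element $\mu_i g_i$ cannot have a unique maximal representation; hence it admits at least two.

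The engine of the proof is a substitution argument. Let $e=(0,\dots,\mu_i,\dots,0)$ denote the representation of $\mu_i g_i$ concentrated at the $i$-th coordinate, and let $\sigma=(\sigma_2,\dots,\sigma_\nu)$ be the \textsf{lex}-maximum among the (finitely many) maximal representations of $\mu_i g_i$. Given any maximal representation of $\mu_i g_i$, substituting it for the occurrence of $\mu_i g_i$ in $\omega=\mu_i g_i+\omega'$ produces a representation of $\omega$ of the same total order, hence again a maximal representation of $\omega$. Concretely, replacing $e$ by $\sigma$ yields $\tilde\lambda$ with $\tilde\lambda_k=\mu_k+\sigma_k$ for $k\ne i$ and $\tilde\lambda_i=\sigma_i$, and one checks directly that $\sum_k\tilde\lambda_k g_k=\omega$ and $\sum_k\tilde\lambda_k=\ord(\omega)$, so $\tilde\lambda\in\mathcal R$. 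If $\sigma$ and $e$ first differ at a coordinate $j^\ast<i$, then necessarily $\sigma_{j^\ast}>0$ while $\sigma_k=0$ for $k<j^\ast$; therefore $\tilde\lambda_k=\mu_k$ for $k<j^\ast$ and $\tilde\lambda_{j^\ast}=\mu_{j^\ast}+\sigma_{j^\ast}>\mu_{j^\ast}$, so $\tilde\lambda>_{\textsf{lex}}\mu$, contradicting the choice of $\mu$.

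To close the argument I must guarantee that $\sigma>_{\textsf{lex}}e$, i.e.\ rule out the possibility that the concentrated representation $e$ is itself \textsf{lex}-maximal; this is the one delicate point. If $e$ were the \textsf{lex}-maximum, then every maximal representation of $\mu_i g_i$ would vanish in all coordinates strictly smaller than $i$. Pick a second maximal representation $\rho\ne e$ (which exists by the first paragraph): then $\rho_k=0$ for $k<i$ and, since $\rho$ has total $\mu_i$, also $\rho_i<\mu_i$, whence $\sum_{j>i}\rho_j=\mu_i-\rho_i>0$. But the generators are strictly increasing, $g_j>g_i$ for $j>i$, so
\[
\sum_{j}\rho_j g_j=\rho_i g_i+\sum_{j>i}\rho_j g_j> \rho_i g_i+\Big(\sum_{j>i}\rho_j\Big)g_i=\mu_i g_i,
\]
contradicting that $\rho$ represents $\mu_i g_i$. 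Hence $\sigma>_{\textsf{lex}}e$, the substitution above applies, and we reach the desired contradiction; therefore $\mu_i\le\gamma_i$ for every $i$. The main obstacle is precisely this last step: the naive substitution only helps when $\mu_i g_i$ has an alternative maximal representation involving a \emph{smaller}-indexed generator, and the role of the monotonicity $g_1<g_2<\dots<g_\nu$ is exactly to force every alternative maximal representation to do so.
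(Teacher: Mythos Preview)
Your proof is correct and follows essentially the same strategy as the paper: assume some $\mu_i>\gamma_i$, exploit the resulting non-uniqueness of a maximal representation of a multiple of $g_i$, substitute to produce a \textsf{lex}-larger element of $\mathcal R$, and use $g_2<\cdots<g_\nu$ to force the alternative representation to involve a smaller-indexed generator. The only variation is cosmetic: the paper works with $(\gamma_i+1)g_i$ (the least multiple with a second maximal representation, which therefore automatically has $g_i$-coefficient zero), whereas you work directly with $\mu_i g_i$ and its \textsf{lex}-maximal representation, which costs you the extra paragraph ruling out that the concentrated tuple $e$ is itself \textsf{lex}-maximal.
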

\begin{proof}
By Lemma \ref{aurora} we have $\mu_i \leq \beta_i$. Let us suppose
that there exists an index $i$ such that $ \gamma_i < \mu_i \leq
\beta_i$, and take the minimum $i$ with this property. By
definitions of $\beta_i, \, \gamma_i$, the fact that $\gamma_i <
\beta_i$ implies a double maximal representation of $(\gamma_i
+1)g_i$; moreover, since $\gamma_i+1$ is the least integer with
this property, the other maximal representation does not involve
$g_i$. Explicitly, we have the relation
\begin{equation}\label{sbagli}
 (\gamma_i+1) g_i = \sum_{j \ne i} \eta_j g_j, \qquad \gamma_i+1= \sum_{j\ne i} \eta_j.
 \end{equation}
Let us substitute the relation just found in $(\mu_2,\ldots,\mu_\nu)$,
i.e. consider the tuple $(\mu'_2,\ldots,\mu'_\nu)$ where
\[ \mu'_i = \mu_i - \gamma_i -1, \qquad \mu'_j= \mu_j + \eta_j \quad \text{for } j \ne i\]
in particular by (\ref{sbagli}) $(\mu'_2,\ldots,\mu'_\nu) \in \mathcal{R}$.
Now, if there is a index $j<i$ such that $\eta_j \ne 0$,
then  $(\mu_2,\ldots,\mu_\nu) < (\mu'_2,\ldots,\mu'_\nu)$ with respect to \textsf{lex},
yielding a contradiction to the choice of $(\mu_2,\ldots,\mu_\nu)$.
Hence $\eta_j=0$ for each $j<i$.
But this is a contradiction to (\ref{sbagli}), since $g_i <g_j$ for $i<j$ and $\gamma_i+1= \sum_{j>i}\eta_j$.
The lemma is proved.
\end{proof}

\begin{cor}
Let $S$ be a numerical semigroup, then  $\ap(S) \subseteq \Gamma$.
\end{cor}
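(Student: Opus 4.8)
The goal is to prove that $\ap(S) \subseteq \Gamma$, where $\Gamma = \{\sum_{i=2}^{\nu} \lambda_i g_i \mid 0 \leq \lambda_i \leq \gamma_i\}$. The plan is to show that every element $\omega \in \ap(S)$ admits a representation falling inside the hyper-rectangle cut out by the $\gamma_i$, and this is essentially immediate from Lemma \ref{dolls}.

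First I would take an arbitrary $\omega \in \ap(S)$ and form the set $\mathcal{R}$ of its maximal representations, exactly as in Lemma \ref{dolls}. Since $\ord(\omega)$ is well defined and $\omega \in S$, the element $\omega$ certainly has at least one maximal representation (a representation realizing the order), so $\mathcal{R} \neq \emptyset$. Being a nonempty finite subset of $\mathbb{N}^{\nu-1}$, $\mathcal{R}$ has a unique maximum with respect to \textsf{lex}; call it $(\mu_2, \ldots, \mu_\nu)$.

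By Lemma \ref{dolls} applied to this \textsf{lex}-maximal maximal representation, we have $\mu_i \leq \gamma_i$ for each $i = 2, \ldots, \nu$. Therefore the tuple $(\mu_2, \ldots, \mu_\nu)$ lies in the hyper-rectangle $\{0 \leq \lambda_i \leq \gamma_i\}$, and since $\omega = \sum_{i=2}^{\nu} \mu_i g_i$, we conclude $\omega \in \Gamma$. As $\omega$ was arbitrary, $\ap(S) \subseteq \Gamma$.

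I do not anticipate any genuine obstacle here: all the work has been front-loaded into Lemma \ref{dolls}, and the corollary is just the observation that the \textsf{lex}-maximal maximal representation witnesses membership in $\Gamma$. The only point requiring a word of care is confirming that $\mathcal{R}$ is nonempty so that the \textsf{lex}-maximum exists, but this follows directly from the definition of order as the number of summands in a shortest-length—equivalently, maximal—representation.
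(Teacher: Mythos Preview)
Your argument is correct and matches the paper's intended reasoning: the corollary is stated without proof there, as an immediate consequence of Lemma~\ref{dolls}, and your write-up simply spells out that the \textsf{lex}-maximal maximal representation witnesses $\omega\in\Gamma$. One small slip to fix in your last sentence: a \emph{maximal} representation is one with the \emph{largest} total $\sum\lambda_i$ (since $\ord(s)=\max\{h:s\in hM\}$), not the shortest; the nonemptiness of $\mathcal{R}$ follows because among the finitely many representations of $\omega$ (all of which avoid $g_1$ since $\omega\in\ap(S)$) one attains this maximum.
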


\begin{rem}\label{father}
It is straightforward to check that, under the notation of Lemma
\ref{dolls},  $(\mu_2, \ldots, \mu_\nu)$ is also the maximum with
respect to \textsf{grlex} in the set of (not necessarily maximal)
representations of $\omega$.
\end{rem}

We are interested in numerical semigroups for which the inclusions shown so far turn out to be equalities.

\begin{defin}\label{1} Let $S$ be a numerical semigroup:
\begin{enumerate}
  \item the semigroup $S$ has {\em $\beta$-rectangular Ap\'ery set} if
  $\ap(S)=B$;
  \item the semigroup $S$ has {\em $\gamma$-rectangular Ap\'ery set} if
  $\ap(S)=\Gamma$.
\end{enumerate}
\end{defin}

\begin{cor}\label{work}
We have the following implication:
\begin{center}
$\ap(S)$ is 
$\beta$-rectangular $\Rightarrow$ $\ap(S)$ is
$\gamma$-rectangular.
\end{center}
\end{cor}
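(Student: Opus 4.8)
The plan is to exploit the chain of inclusions already established in the excerpt, together with the hypothesis, to produce a squeeze. Recall that two facts are available to me before reaching this corollary: first, by Proposition~\ref{sco} we have $\gamma_i \leq \beta_i$ for every $i$, from which the containment $\Gamma \subseteq B$ follows directly (any tuple $(\lambda_2,\ldots,\lambda_\nu)$ with $0 \leq \lambda_i \leq \gamma_i$ also satisfies $0 \leq \lambda_i \leq \beta_i$); second, the Corollary immediately following Lemma~\ref{dolls} gives the inclusion $\ap(S) \subseteq \Gamma$. Chaining these together, I obtain
\[
\ap(S) \subseteq \Gamma \subseteq B,
\]
valid for \emph{every} numerical semigroup, with no extra hypothesis.

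With this chain in hand, the implication is a one-line squeeze. Assuming the hypothesis that $\ap(S)$ is $\beta$-rectangular, i.e.\ $\ap(S) = B$ by Definition~\ref{1}(1), the outermost and innermost terms of the chain coincide. Hence $\ap(S) \subseteq \Gamma \subseteq B = \ap(S)$ forces $\Gamma = \ap(S)$, which is exactly the statement that $\ap(S)$ is $\gamma$-rectangular by Definition~\ref{1}(2). This completes the argument.

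I expect no genuine obstacle in the corollary itself: all of the substantive content has been front-loaded into the supporting results. The real work lies in Lemma~\ref{dolls} (selecting the \textsf{lex}-maximal maximal representation of $\omega$ and showing its coordinates are bounded by the $\gamma_i$, via the substitution of the double-maximal relation \eqref{sbagli}), which is what secures the crucial inclusion $\ap(S) \subseteq \Gamma$. Proposition~\ref{sco} handles the other inclusion at essentially no cost. Consequently, the only thing to verify here is that the inequality $\gamma_i \le \beta_i$ indeed yields $\Gamma \subseteq B$ and that the two set-theoretic inclusions can legitimately be concatenated; both are immediate from the definitions, so the proof reduces to the inclusion-sandwich displayed above.
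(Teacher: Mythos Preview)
Your argument is correct and matches the paper's proof exactly: both reduce the corollary to the chain of inclusions $\ap(S) \subseteq \Gamma \subseteq B$ and then squeeze. Your write-up is simply a more explicit version of the one-line justification the paper gives.
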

\begin{proof}
It follows immediately by the inclusions $\ap(S) \subseteq \Gamma
\subseteq B$
\end{proof}

\begin{rem} It would be natural to introduce also another set of integers:
$\alpha_i =\max\{ h \in \mathbb{N} \, | \, hg_i \in \ap(S)\}$; it
is clear that $\alpha_i \geq \beta_i$. These integers, as for the
$\beta_i$'s and the $\gamma_i$'s, yield to another class of
semigroups (that we could call semigroups with
$\alpha$-rectangular Ap\'ery set) interesting to discuss and
somehow similar to the two classes just defined. However, this
class would provide a too strong  condition with respect to the
property of $\overline G$ to be Complete Intersection, hence, for
brevity, we will omit its study.
\end{rem}

\begin{exs}\label{face}

(1) Let $S= \langle 8, 10, 15 \rangle$; we have seen in Examples
\ref{liberi} that $\ap(S) = \{0, 10, 15,$ $ 20, 25, 30, 35, 45\}$
and $\beta_2=3,\, \beta_3=1$. Hence we obtain
that 
$\ap(S)$ is
$\beta$-rectangular.

(2) Let $S =\langle 8, 10, 11, 12\rangle$; we have $\ap(S) = \{0,
10, 11, 12, 21, 22, 23, 33\}$. Since $20, \, 44, \, 24 \notin
\ap(S)$ and $\ord(10)=1, \, \ord(33)=3, \, \ord(12)=1$ we have
$\beta_2= 1, \, \beta_4=3, \beta_4=1$. It is clear that
$\gamma_2=\gamma_4 = 1$; since $22= 11+11=10+12$ has two maximal
representation, we find $\gamma_3=1$. Thus $\ap(S)$ is
$\gamma$-rectangular  but $\ap(S)$ is not $\beta$-rectangular.

(3) Let $S=\langle 5, 6 , 9 \rangle$; we have
$\ap(S)=\{0,6,9,12,18\}$ and the only double representation is
$18=3\cdot 6=2 \cdot 9$. In this case $\gamma_2=3$, as $18=3\cdot
6$ is the unique maximal representation, while $\gamma_3=1$
because $\ord(2\cdot 18)=3 >2$. It follows that $\ap(S)$ is not
$\gamma$-rectangular, since $\lambda_2\cdot6+\lambda_3\cdot9
\notin \ap(S)$ as soon as both $\lambda_2 > 0$ and $\lambda_3 >
0$.
\end{exs}

Our main purpose is to characterize these kinds of numerical
semigroups in terms of some of their invariants, namely the
Frobenius number and the multiplicity. This will allow simpler
tests for those properties. We also want to study the relations
between these classes of semigroups and the properties of unique
maximal expression of $\ap(S)$ and of the partial orders $\preceq$
and $\preceq_M$.

The next lemmas concern maximal representations and are necessary
to establish a characterization of semigroups with
$\beta$-rectangular $\ap(S)$.

\begin{lem}\label{b}
If $s \preceq_M t$ and $t$ has a unique maximal representation,
then $s$ has a unique maximal representation.
\end{lem}

\begin{proof}
We have that $s+u= t$ for some $u \in S$ and
$\ord(s)+\ord(u)=\ord(t)$. If $s$ had two maximal representations,
the same should be true for $t$; contradiction.
\end{proof}

\begin{lem}\label{c}
Let $S$ be a numerical semigroup with $\beta$-rectangular Ap\'ery
set and let $\omega \in \ap(S)$. Then any representation
$\omega=\lambda _2g_2+\dots+\lambda_\nu g_\nu$, with $\lambda_i
\leq \beta_i$ for each $i=2,\dots,\nu$, is maximal.
\end{lem}

\begin{proof}
Assume that the representation $\omega=\lambda
_2g_2+\dots+\lambda_\nu g_\nu$ is not maximal. Let
$\omega=\mu_2g_2+\dots+\mu_\nu g_\nu$ be a maximal representation
of $\omega$; since $\omega\in \ap(S)$, by Lemma \ref{aurora}, we
have $\mu_i \leq \beta_i$ for every $i=2,\dots,\nu$. Moreover, by
maximality, we have $\sum_{i=2}^{\nu} \lambda_i < \sum_{i=2}^{\nu}
\mu_i$, hence  there exists an index $i$ such that
$\lambda_i<\mu_i$. On the other hand, since $\lambda
_2g_2+\dots+\lambda_\nu g_\nu=\mu_2g_2+\dots+\mu_\nu g_\nu$, there
exists another index $j$ such that $\lambda_j>\mu_j$. Subtracting
from both sides of the previous equality the common summands, we
get the equality
\begin{equation}\label{cresce}
\sum_{i\in T_1}\eta_i g_i=\sum_{i\in T_2}\eta_i g_i
\end{equation}
 where $T_1,\, T_2$ are two non-empty disjoint subsets of $\{2,\dots \nu\}$,  $0 \leq
\eta_i \leq \beta_i$, for each $i=2,\dots, \nu$, and $\sum_{i\in
T_1}\eta_i < \sum_{i\in T_2}\eta_i$.

Since $S$ has $\beta$-rectangular Ap\'ery set, the element
$t=\sum_{i=2}^{\nu}\beta_ig_i$ belongs to $\ap(S)$. Let us
substitute the relation (\ref{cresce}) in this representation of
$t$, that is to say consider the tuple $(\xi_2, \ldots, \xi_\nu)$
where
\[ \xi_i= \beta_i - \eta_i \mbox{ if } i \in T_1,
\qquad \xi_i= \beta_i + \eta_i \mbox{ if } i \in T_2, \qquad
\xi_i= \beta_i \mbox{ otherwise. } \] This is another
representation of $t$ and $\sum_{i=2}^\nu \beta_i < \sum_{i=2}^\nu
\xi_i$ by (\ref{cresce}). It follows that $(\beta_2, \ldots,
\beta_\nu)$ is not a maximal representation and, therefore, $t$
has a maximal representation with some coefficient bigger than
$\beta_i$; contradiction to Lemma \ref{aurora}.
\end{proof}

\begin{rem}\label{micro}
Combining the last lemma and Lemma \ref{aurora}, we have that the
maximality of a representation $(\lambda_2, \ldots, \lambda_\nu)$
of an element of $\ap(S)$ is equivalent to have  $\lambda_i \leq
\beta_i$ for each $i$, under the assumption of $\beta$-rectangular
Ap\'ery set.
\end{rem}

We are ready to give some characterizations of semigroups with
$\beta$-rectan\-gular Ap\'ery set.

\begin{thm}\label{d}
The following conditions are equivalent:

\begin{enumerate}
     \item[(i)]
      $\ap(S)$ is $\beta$-rectangular;
       \item[(ii)] $\ap(S)$ has a unique maximal element with respect to $\preceq_M$ and this element has
       unique maximal representation;
       \item[(iii)] $S$ is $M$-pure, symmetric and  $\ap(S)$ is of unique maximal expression;
       \item[(iv)] $f+m=\sum_{i=2}^{\nu} \beta_i g_i$;
       \item[(v)] $m=\prod_{i=2}^{\nu}(\beta_i+1)$.
\end{enumerate}
\end{thm}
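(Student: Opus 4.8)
The plan is to prove the cycle of implications
$(i) \Rightarrow (iii) \Rightarrow (ii) \Rightarrow (iv) \Rightarrow (v) \Rightarrow (i)$,
interleaving the combinatorial characterizations with the two numerical ones, so that each step uses only what was just established. Throughout I will write $t = \sum_{i=2}^\nu \beta_i g_i$ for the candidate top element; since $\ap(S) \subseteq B$ always holds, $\beta$-rectangularity is equivalent to the reverse inclusion $B \subseteq \ap(S)$, and in particular to $t \in \ap(S)$.

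\textbf{From $(i)$ to the structural conditions.} Assume $\ap(S)$ is $\beta$-rectangular. First I would show $t$ is the unique $\preceq_M$-maximal element with unique maximal representation, giving $(ii)$, and simultaneously extract $M$-purity, symmetry and unique maximal expression for $(iii)$. The key tool is Lemma~\ref{c} together with Remark~\ref{micro}: under $\beta$-rectangularity a representation of an Apéry element is maximal \emph{iff} all coefficients satisfy $\lambda_i \le \beta_i$. Hence for any $\omega = \sum \mu_i g_i \in \ap(S)$ with $\mu_i \le \beta_i$ one has $\omega \preceq_M t$, because $t - \omega = \sum(\beta_i - \mu_i)g_i$ has order $\sum(\beta_i-\mu_i)$ by Lemma~\ref{c} and the orders add. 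This yields that $t$ dominates every element of $\ap(S)$ in $\preceq_M$, so by Proposition~\ref{crede} $S$ is $M$-pure symmetric, and $t = f+m$. For uniqueness of the maximal representation of each $\omega$, I would argue that two distinct maximal representations of some $\omega$ would (after cancelling common parts and using $\beta$-rectangularity exactly as in the proof of Lemma~\ref{c}) force $t$ to admit a representation with a coefficient exceeding some $\beta_i$, contradicting Lemma~\ref{aurora}. This delivers unique maximal expression and hence $(iii)$; Proposition~\ref{crede} then gives $(ii)$ since $M$-pure symmetric means every $\omega \preceq_M f+m$ and $f+m$ has unique maximal representation.

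\textbf{From $(ii)$ to the numerical equalities.} Assume $(ii)$ with unique $\preceq_M$-maximal element $\eta$ having unique maximal representation $\eta = \sum c_i g_i$. By Lemma~\ref{aurora}, $c_i \le \beta_i$. The crucial claim is $c_i = \beta_i$ for every $i$: if some $c_i < \beta_i$, then $\beta_i g_i \in \ap(S)$ with $\ord(\beta_i g_i)=\beta_i$, and since $\eta$ is $\preceq_M$-maximal we would have $\beta_i g_i \preceq_M \eta$; but then $\eta = \beta_i g_i + u$ with $\ord(u)=\ord(\eta)-\beta_i$, producing a maximal representation of $\eta$ whose $i$-th coefficient is at least $\beta_i > c_i$, contradicting uniqueness. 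Hence $\eta = t = f+m$, which is $(iv)$. For $(iv)\Rightarrow(v)$ I would show $B \subseteq \ap(S)$, i.e.\ $\beta$-rectangularity, and then count: the multiplicity $m = |\ap(S)|$, while $|B| \le \prod(\beta_i+1)$ with equality exactly when the representations $(\lambda_2,\dots,\lambda_\nu)$, $0\le\lambda_i\le\beta_i$, hit distinct residues mod $m$. The equality $f+m = \sum \beta_i g_i$ forces each such tuple to give an Apéry element (any strict subtuple lies below $f+m$ in $\preceq_M$), and distinctness follows because a coincidence would again violate Lemma~\ref{aurora} at $t$; thus $m = \prod(\beta_i+1)$.

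\textbf{Closing the cycle and the main obstacle.} For $(v)\Rightarrow(i)$, the inequalities $m=|\ap(S)| \le |B| \le \prod(\beta_i+1)=m$ collapse to equalities, forcing $\ap(S)=B$ and the map $(\lambda_i)\mapsto \sum\lambda_i g_i$ to be a bijection onto $\ap(S)$; this is precisely $\beta$-rectangularity. The main obstacle I anticipate is the step establishing that all coefficients of the top element equal $\beta_i$ and that the $\prod(\beta_i+1)$ tuples produce \emph{distinct} Apéry elements: both rest on the same delicate point, namely that a ``collision'' among rectangle tuples can be propagated — by substitution into the extremal tuple $(\beta_2,\dots,\beta_\nu)$ exactly as in Lemma~\ref{c} — to manufacture a maximal representation of $t$ with an out-of-range coefficient, contradicting Lemma~\ref{aurora}. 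Getting this cancellation-and-substitution argument to run cleanly, keeping track of which coefficients increase versus decrease and ensuring the resulting tuple is genuinely maximal, is where the real bookkeeping lies; the two numerical equivalences $(iv)$ and $(v)$ are then comparatively routine counting once the injectivity on the rectangle is secured.
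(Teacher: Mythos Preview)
Your proposal is correct and follows essentially the same approach as the paper: the same key ingredients (Lemma~\ref{aurora}, Lemma~\ref{c}, the substitution trick at the extremal tuple $(\beta_2,\dots,\beta_\nu)$, Proposition~\ref{1.1}, and the counting $m=|\ap(S)|$) drive every implication. The only organisational differences are that you run a single cycle $(i)\Rightarrow(iii)\Rightarrow(ii)\Rightarrow(iv)\Rightarrow(v)\Rightarrow(i)$ whereas the paper uses the chain $(i)\Rightarrow(ii)\Rightarrow(iii)\Rightarrow(iv)\Rightarrow(i)$ together with $(i)\Leftrightarrow(v)$, and that you obtain unique maximal expression in $(i)\Rightarrow(iii)$ by a direct collision argument \`a la Lemma~\ref{c}, while the paper first isolates $(ii)$ and then invokes Lemma~\ref{b}; both routes are equally short.
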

\begin{proof}

(i) $\Rightarrow$ (ii) Since $\ap(S)$ is $\beta$-rectangular, we
immediately get that $\sum_{i=2}^{\nu} \beta_i g_i$ is the unique
maximal element in $(\ap(S), \preceq)$ and hence it is $f+m$.
Moreover, by Lemma \ref{c},
 any element $\omega \in \ap(S)$ is maximally represented by a tuple
 $(\lambda_2, \ldots, \lambda_\nu)$ with $\lambda_i \leq \beta_i$;
 in particular $\ord(\omega)= \sum_{i=2}^{\nu} \lambda_i$ and we can deduce that
 $\omega \preceq_M f+m$, for each $\omega\in \ap(S)$.

 Finally, by Remark \ref{micro}, $\sum_{i=2}^{\nu} \beta_i g_i$ is the unique maximal
representation of $f+m$.

(ii) $\Rightarrow$ (iii) By Proposition \ref{crede}, $S$ is
$M$-pure symmetric; by Lemma \ref{b}, every  $\omega \in \ap(S)$
has a unique maximal representation.

(iii) $\Rightarrow$ (iv) The unique maximal element in $(\ap(S),
\preceq_M)$ is necessarily $f+m$. Its unique maximal
representation is of the form $f+m=\sum_{i=2}^{\nu} \lambda_i g_i$
with $ \lambda_i \leq \beta_i$ by Lemma \ref{aurora}.

Since $S$ is $M$-pure symmetric $\beta_i g_i \preceq_M f+m$ for
each $i=2,\ldots, \nu$. Hence $f+m-\beta_2g_2 \in \ap(S)$ and, by
Lemma \ref{b}, it has unique maximal representation
$f+m-\beta_2g_2=\sum_{i=2}^{\nu} \varepsilon_i g_i$; moreover
$\varepsilon_i\leq \beta_i$,  by maximality of the representation.
It follows that $f+m=\beta_2g_2+\sum_{i=2}^{\nu} \varepsilon_i
g_i$ is a maximal representation, hence $\varepsilon_2=0$ (again
by Lemma \ref{aurora}). Moreover, since $f+m$ has a unique maximal
representation, $\lambda_2=\beta_2$; arguing recursively, the
thesis follows.

(iv) $\Rightarrow$ (i) It is a consequence of Proposition
\ref{1.1}.

(i) $\Rightarrow$ (v) It follows by $m=|\ap(S)|$, the fact that
$\ap(S)$ is of unique maximal expression
 and Remark \ref{micro}.

(v) $\Rightarrow$ (i) We already noticed that $\ap(S) \subseteq B$
and since $m=\prod_{i=2}^{\nu}(\beta_i+1)=|\ap(S)|$, we must have
an equality.
\end{proof}

\begin{ex}\label{beta}
We may apply the last theorem to show that the Ap\'ery set of
$S=\langle 12,14,16,23 \rangle$ is $\beta$-rectangular, without
even computing the whole set $\ap(S)$. We need to determine the
$\beta_i$'s:
\begin{eqnarray*}
2\cdot 14 &=& 12+ 16 \in 12+S \\
2 \cdot 16 &=& 32 \in 2M \setminus 3M \ \text{and} \ 32-12 \notin S \\
3 \cdot 16 &=& 4 \cdot 12 \in 12+S\\
2 \cdot 23 &=& 2 \cdot 16+14 \in 3M
\end{eqnarray*}
and so $\beta_2=1,\, \beta_3=2,\, \beta_4=1 $ and $m=12=
2\cdot3\cdot2 =\prod_{i=2}^{\nu}(\beta_i+1)$.

\end{ex}
\medskip

We have seen (cf. Corollary \ref{work}) that the following
implication holds:
\begin{center}
$\ap(S)$ is
$\beta$-rectangular $\Rightarrow$ $\ap(S)$ is
$\gamma$-rectangular.
\end{center}
But a priori we still might have
$\beta_i > \gamma_i$ when
$\ap(S)$ is $\beta$-rectangular, for some $i$. We show that this
is not the case, using the theorem just proved.
\begin{cor}\label{beta2}
Let $S$ be a numerical semigroup. If $\ap(S)$ is
$\beta$-rectangular, then $\beta_i = \gamma_i$, for every $i = 2 ,
\ldots, \nu$.
\end{cor}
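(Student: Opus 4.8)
The plan is to prove the reverse of the inequality in Proposition \ref{sco}, namely $\beta_i \le \gamma_i$, since together with $\gamma_i \le \beta_i$ this forces equality. Because $\gamma_i$ is defined exactly like $\beta_i$ but with the additional requirement that the relevant multiple have a \emph{unique} maximal representation, it suffices to verify that $\beta_i g_i$ itself has a unique maximal representation; if so, then $\beta_i g_i$ satisfies all the defining conditions of $\gamma_i$, giving $\gamma_i \ge \beta_i$.

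First I would record what the definition of $\beta_i$ already gives us for free. By definition, $\beta_i g_i \in \ap(S)$ and $\ord(\beta_i g_i) = \beta_i$. Consequently the tuple supported only at coordinate $i$, with entry $\beta_i$, is a representation of $\beta_i g_i$ whose coordinate sum equals $\ord(\beta_i g_i)$; hence it is a maximal representation of $\beta_i g_i$. So $\beta_i g_i$ certainly \emph{has} a maximal representation of the expected form, and the only thing left to establish is its uniqueness.

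The key step is to feed the hypothesis through Theorem \ref{d}. Since $\ap(S)$ is assumed $\beta$-rectangular, condition (i) of that theorem holds, and therefore so does condition (iii): in particular, $\ap(S)$ is of unique maximal expression, i.e.\ every element of $\ap(S)$ has a unique maximal representation. Applying this to the element $\beta_i g_i \in \ap(S)$, we conclude that its maximal representation is unique.

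Putting these together, $\beta_i g_i$ lies in $\ap(S)$, has order equal to $\beta_i$, and admits a unique maximal representation; hence by the definition of $\gamma_i$ we obtain $\gamma_i \ge \beta_i$, and combined with Proposition \ref{sco} this yields $\beta_i = \gamma_i$ for every $i = 2, \ldots, \nu$. I do not expect a genuine obstacle here: the entire content is the passage $\beta$-rectangular $\Rightarrow$ unique maximal expression supplied by Theorem \ref{d}, and the one point to state carefully is that the ``obvious'' representation of $\beta_i g_i$ is maximal, which is built directly into the definition of $\beta_i$ via the condition $\ord(\beta_i g_i)=\beta_i$.
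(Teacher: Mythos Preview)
Your proof is correct and follows essentially the same route as the paper's: both arguments reduce to the fact, supplied by Theorem \ref{d}, that $\beta$-rectangularity forces every element of $\ap(S)$---in particular $\beta_i g_i$---to have a unique maximal representation. The only cosmetic difference is that the paper phrases this as a proof by contradiction (assuming $\gamma_i<\beta_i$ produces a multiple maximal representation, contradicting Theorem \ref{d}), while you give the equivalent direct verification that $\beta_i$ lies in the set defining $\gamma_i$.
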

\begin{proof}
If there is an index $i$ such that $\gamma_i < \beta_i $,
then, by definition of $\beta_i$ and of  $\gamma_i$, $\beta_i g_i$
is in the Ap\'ery set and it has more maximal representations.
Contradiction to Theorem \ref{d}, (ii).
\end{proof}
\medskip

We now turn to the study of semigroups with $\gamma$-rectangular
Ap\'ery set, starting with a result that is analogous to Lemma
\ref{c}.

\begin{lem}\label{soul}
Let $S$ be a semigroup with $\gamma$-rectangular Ap\'ery set and
let $\omega \in \ap(S)$. Then any representation $\omega=
\lambda_2 g_2 + \cdots + \lambda_\nu g_\nu$, with $\lambda_i \leq
\gamma_i$ (for every $i=2\dots,\nu$), is maximal.
\end{lem}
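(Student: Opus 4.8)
The goal is to show, under the assumption that $\ap(S)$ is $\gamma$-rectangular, that every representation $\omega = \lambda_2 g_2 + \dots + \lambda_\nu g_\nu$ of an element $\omega \in \ap(S)$ with all $\lambda_i \leq \gamma_i$ is automatically maximal. This is the exact $\gamma$-analogue of Lemma \ref{c}, so my first instinct is to mimic the structure of that proof, replacing the role of $\beta_i$ by $\gamma_i$ throughout. I would argue by contradiction: suppose such a representation is not maximal, and let $\omega = \mu_2 g_2 + \dots + \mu_\nu g_\nu$ be an actual maximal representation. By Lemma \ref{dolls} (or at least Lemma \ref{aurora}) I can control the coefficients $\mu_i$, though I should be careful, since the clean bound $\mu_i \leq \gamma_i$ only holds for the lex-maximal maximal representation, not for an arbitrary one. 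So I would specifically take $(\mu_2,\ldots,\mu_\nu)$ to be the lexicographically largest maximal representation, which by Lemma \ref{dolls} satisfies $\mu_i \leq \gamma_i$ for each $i$.

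With both tuples now bounded by the $\gamma_i$, I would subtract common summands to produce, exactly as in Lemma \ref{c}, a relation of the form
\[
\sum_{i \in T_1} \eta_i g_i = \sum_{i \in T_2} \eta_i g_i
\]
where $T_1, T_2$ are disjoint nonempty subsets of $\{2,\ldots,\nu\}$, with $0 \leq \eta_i \leq \gamma_i$ and $\sum_{i \in T_1} \eta_i < \sum_{i \in T_2} \eta_i$ (the strict inequality coming from the failure of maximality of the $\lambda$-representation). The key structural input is that $\gamma$-rectangularity forces $t = \sum_{i=2}^\nu \gamma_i g_i \in \ap(S)$. I would then substitute the displayed relation into this representation of $t$, defining $\xi_i = \gamma_i - \eta_i$ for $i \in T_1$, $\xi_i = \gamma_i + \eta_i$ for $i \in T_2$, and $\xi_i = \gamma_i$ otherwise. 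This is a legitimate representation of the same element $t \in \ap(S)$, and it has strictly larger coefficient sum, so $(\gamma_2,\ldots,\gamma_\nu)$ cannot be a maximal representation of $t$.

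The main obstacle — and the reason this is genuinely the delicate step rather than a routine transcription of Lemma \ref{c} — is how to extract a contradiction from the last observation. In the $\beta$-case one simply invokes Lemma \ref{aurora} to say the existence of a coefficient exceeding $\beta_i$ is impossible. For the $\gamma$-case the analogous forbidden conclusion is subtler: I need to show that a maximal representation of $t$ with some coefficient exceeding the corresponding $\gamma_i$ contradicts the definition of the $\gamma_i$ together with the hypothesis $\ap(S) = \Gamma$. Concretely, from the relation $(\gamma_i+1)$-type bound failing, I would trace back to the element $(\gamma_i+1)g_i$: if a maximal representation of $t$ uses $g_i$ with coefficient $\geq \gamma_i + 1$, then by Proposition \ref{1.1} the element $(\gamma_i+1)g_i$ lies in $\ap(S)$ and, having arisen inside a maximal representation of $t$, has order $\gamma_i+1$, so $\gamma_i+1 \leq \gamma_i$ by definition of $\gamma_i$ unless the uniqueness-of-maximal-representation clause is what fails. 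I expect the cleanest route is to reduce to the lex-maximal maximal representation of $t$ and apply Lemma \ref{dolls} to that, forcing all its coefficients to be $\leq \gamma_i$ and directly contradicting the strict increase in coefficient sum — this sidesteps the need to reason about the unique-maximal-representation condition separately and keeps the argument fully parallel to Lemma \ref{c}.
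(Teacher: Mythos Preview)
Your proof is correct, and the final contradiction is exactly the one the paper uses: once you know $(\gamma_2,\ldots,\gamma_\nu)$ fails to be a maximal representation of $t=\sum_i \gamma_i g_i \in \ap(S)$, Lemma~\ref{dolls} applied to $t$ gives a maximal representation with all coefficients $\leq \gamma_i$, hence with coefficient sum $\leq \sum_i \gamma_i < \ord(t)$, which is absurd.

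The paper's route to that point is a bit shorter than yours. Rather than take a second representation $(\mu_2,\ldots,\mu_\nu)$ of $\omega$, extract a relation on disjoint index sets, and substitute it into $t$, the paper simply writes $f+m = \omega + \sum_i (\gamma_i-\lambda_i)g_i$ and observes that replacing $\omega$ by any representation of larger length immediately yields a representation of $f+m$ with coefficient sum $> \sum_i \gamma_i$; this reduces at once to the case $\omega = f+m$, $\lambda_i = \gamma_i$. Your detour through the structure of Lemma~\ref{c} (subtracting common summands, building the $\xi$-tuple) accomplishes the same reduction but with extra bookkeeping; the paper's direct substitution shows that none of that intermediate apparatus is actually needed here.
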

\begin{proof}
Assume, by absurd, that there exists  $\omega\in \ap(S)$ with a
non-maximal representation $\omega= \sum_{i=2}^\nu \lambda_i g_i$,
where $\lambda_i \leq \gamma_i$ (for every $i=2\dots,\nu$). Notice
that, as we have already seen for the case $\ap(S)=B$, if
$\ap(S)=\Gamma$, then $f+m= \sum_{i=2}^\nu \gamma_i g_i$. In
particular, up to substituting the non-maximal representation of
$\omega$ in $f+m=\sum_{i=2}^\nu \gamma_i g_i$, we may assume
$\omega=f+m$ and hence $\lambda_i = \gamma_i$, for each $i$.

If we take a maximal representation $(\mu_2, \ldots, \mu_\nu)$ of
$f+m$, then we have the strict inequality
\begin{equation}\label{asiam}
(\gamma_2, \ldots, \gamma_\nu) < (\mu_2, \ldots, \mu_\nu)
\end{equation}
with respect to \textsf{grlex} (since the sums of the respective
coefficients are different). In particular (\ref{asiam}) holds if
we choose $(\mu_2, \ldots, \mu_\nu)$ to be the maximum in the set
of all the representations of $f+m$ with respect to
\textsf{grlex}. By Remark \ref{father}  we have
\[(\gamma_2, \ldots, \gamma_\nu) < (\mu_2, \ldots, \mu_\nu) \leq (\gamma_2, \ldots, \gamma_\nu)\]
with respect to \textsf{grlex}, and thus we reach a contradiction.
\end{proof}

\begin{lem}\label{honor}
Let $S$ be a semigroup with $\gamma$-rectangular Ap\'ery set. Then
each $\omega \in \ap(S)$ has a unique representation of the form
$\omega= \lambda_2 g_2 + \cdots + \lambda_\nu g_\nu$, with
$\lambda_i\leq \gamma_i$, for every $i=2,\dots \nu$.
\end{lem}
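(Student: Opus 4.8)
The plan is to prove existence and uniqueness separately. Existence is immediate: since $\ap(S)=\Gamma$, by definition every $\omega\in\ap(S)$ can be written as $\sum_{i=2}^{\nu}\lambda_i g_i$ with $0\le\lambda_i\le\gamma_i$. The whole content lies in uniqueness, and the key preliminary observation is that, by Lemma \ref{soul}, \emph{any} representation of $\omega$ with all coefficients bounded by $\gamma_i$ is automatically maximal; in particular two such representations must have the same coefficient sum $\ord(\omega)$. I will also use the identity $f+m=\sum_{i=2}^{\nu}\gamma_i g_i$, already noted in the proof of Lemma \ref{soul}.

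First I would argue by contradiction, assuming $\omega\in\ap(S)$ admits two distinct representations $(\lambda_2,\ldots,\lambda_\nu)$ and $(\mu_2,\ldots,\mu_\nu)$ inside the $\gamma$-box. Cancelling the common part $\min(\lambda_i,\mu_i)$ from each side, exactly as in the proof of Lemma \ref{c}, I obtain an equi-degree relation
\[ \sum_{i\in T_1}\eta_i g_i=\sum_{i\in T_2}\eta_i g_i,\qquad \sum_{i\in T_1}\eta_i=\sum_{i\in T_2}\eta_i, \]
where $T_1,T_2$ are disjoint nonempty subsets of $\{2,\ldots,\nu\}$ with $1\le\eta_i\le\gamma_i$ on $T_1\cup T_2$; the two coefficient sums agree precisely because both representations are maximal.

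The crucial step is to transport this relation into $f+m=\sum_{i=2}^{\nu}\gamma_i g_i$. Setting $j=\min(T_1\cup T_2)$ and relabelling so that $j\in T_1$, I replace $\sum_{i\in T_2}\eta_i g_i$ by $\sum_{i\in T_1}\eta_i g_i$ inside the representation $(\gamma_2,\ldots,\gamma_\nu)$ of $f+m$. Since $\eta_i\le\gamma_i$ on $T_2$ this operation is legitimate, and because the two sides have equal degree the resulting tuple $(\xi_2,\ldots,\xi_\nu)$ is again a maximal representation of $f+m$, with $\xi_i=\gamma_i$ for $i<j$ and $\xi_j=\gamma_j+\eta_j>\gamma_j$. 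Comparing $(\xi_i)$ with the \textsf{lex}-maximum of the maximal representations of $f+m$, which by Lemma \ref{dolls} has all coordinates bounded by $\gamma_i$, the first coordinate in which the two tuples differ is at most $j$, and there $(\xi_i)$ is strictly larger; hence $(\xi_i)$ exceeds that \textsf{lex}-maximum, a contradiction.

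I expect the main obstacle to be the choice of which bound to compare against. The naive move is to observe that $(\xi_i)$ is a maximal representation of $f+m\in\ap(S)$ and to invoke Lemma \ref{aurora}, but this only yields $\xi_i\le\beta_i$, which is too weak, as $\gamma_i$ may be strictly smaller than $\beta_i$. The point is rather to exploit the sharper $\gamma$-bound available for the \textsf{lex}-maximum in Lemma \ref{dolls}, while carefully tracking the \emph{smallest} disturbed index $j$, so that the substitution strictly increases the \textsf{lex}-rank. Controlling this smallest index, and thereby guaranteeing that all coordinates below it remain equal to $\gamma_i$, is exactly what forces the contradiction to go through.
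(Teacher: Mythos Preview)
Your proof is correct and follows essentially the same route as the paper: both arguments use Lemma \ref{soul} to see that the two bounded representations are maximal, then manufacture a maximal representation of $f+m$ that lies strictly above $(\gamma_2,\ldots,\gamma_\nu)$ in \textsf{lex}, contradicting Lemma \ref{dolls}. The only difference is cosmetic: the paper simply adds the tuple $(\gamma_i-\lambda_i)$ to both representations, sending $(\lambda_i)$ to $(\gamma_i)$ and $(\mu_i)$ to something \textsf{lex}-larger, whereas you first cancel common parts (as in Lemma \ref{c}) and then track the minimal disturbed index; this extra bookkeeping is unnecessary but harmless.
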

\begin{proof}
Assume, by absurd,
that there are two distinct representations of $\omega \in \ap(S)$
\[ \omega= \sum_{i=2}^\nu \lambda_i g_i = \sum_{i=2}^\nu \mu_i g_i, \text{ with } \lambda_i, \mu_i \leq \gamma_i. \]
By Lemma \ref{soul} both representations are maximal, and in
particular $\sum_{i=2}^\nu \lambda_i  = \sum_{i=2}^\nu \mu_i$;
since they are distinct we have, for instance,
\[ (\lambda_2, \ldots, \lambda_\nu) < (\mu_2, \ldots, \mu_\nu)\]
with respect to \textsf{lex}. By adding the tuple $(\gamma_2
-\lambda_2, \ldots, \gamma_\nu - \lambda_\nu)$ to both sides of
the last equality we have two maximal representations of $f+m$:
\[ (\gamma_2, \ldots, \gamma_\nu) <  (\mu_2+\gamma_2 -\lambda_2, \ldots,\mu_\nu+ \gamma_\nu - \lambda_\nu)\]
with respect to \textsf{lex}.
We reach an absurd by Lemma \ref{dolls}.
\end{proof}

\begin{cor}\label{joy}
If $S$ has  $\gamma$-rectangular Ap\'ery set, then it is $M$-pure symmetric.
\end{cor}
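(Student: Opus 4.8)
The plan is to invoke Proposition \ref{crede}, which reduces $M$-pure symmetry to the single condition that $\omega \preceq_M f+m$ for every $\omega \in \ap(S)$. So I would fix an arbitrary $\omega \in \ap(S)$ and produce a witness $u \in S$ with $\omega + u = f+m$ and $\ord(\omega) + \ord(u) = \ord(f+m)$, which is exactly what $\omega \preceq_M f+m$ means.

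First I would record the relevant order data. Since $\ap(S) = \Gamma$, I can write $\omega = \sum_{i=2}^\nu \lambda_i g_i$ with $0 \leq \lambda_i \leq \gamma_i$; by Lemma \ref{soul} this representation is maximal, so $\ord(\omega) = \sum_{i=2}^\nu \lambda_i$. As already observed inside the proof of Lemma \ref{soul}, the equality $\ap(S)=\Gamma$ forces $f+m = \sum_{i=2}^\nu \gamma_i g_i$, and this representation is again maximal by Lemma \ref{soul}, whence $\ord(f+m) = \sum_{i=2}^\nu \gamma_i$.

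The key step is to take the complement $u = \sum_{i=2}^\nu (\gamma_i - \lambda_i) g_i$. Each coefficient satisfies $0 \leq \gamma_i - \lambda_i \leq \gamma_i$, so $u$ lies in $\Gamma = \ap(S)$; by Lemma \ref{soul} this representation of $u$ is maximal as well, so $\ord(u) = \sum_{i=2}^\nu (\gamma_i - \lambda_i)$. Clearly $\omega + u = f+m$, and summing the two order identities gives $\ord(\omega) + \ord(u) = \sum_{i=2}^\nu \gamma_i = \ord(f+m)$. Therefore $\omega \preceq_M f+m$, and an appeal to Proposition \ref{crede} completes the proof.

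I expect no serious obstacle here: the whole force of the argument is that Lemma \ref{soul} makes the rectangular description of $\Gamma$ behave additively on orders, so that the naive complement $f+m - \omega$ automatically lands back in $\ap(S)$ with precisely the expected order. The only point requiring a moment's care is verifying that $u$ genuinely belongs to $\Gamma$ (so that Lemma \ref{soul} applies to it), but this is immediate from $0 \leq \lambda_i \leq \gamma_i$.
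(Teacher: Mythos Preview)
Your proof is correct and follows essentially the same route as the paper: write $\omega$ via its $\Gamma$-representation, use Lemma~\ref{soul} to compute orders, take the complement $u=\sum(\gamma_i-\lambda_i)g_i$, and invoke Proposition~\ref{crede}. The paper compresses the construction of $u$ and the order verification into the phrase ``we easily have $\omega \preceq_M \sum \gamma_i g_i = f+m$,'' whereas you spell it out explicitly, but the argument is the same.
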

\begin{proof}
Let $\omega \in \ap(S)$. By Lemma \ref{dolls}, $\omega=
\sum_{i=2}^\nu \lambda_i g_i$, with $\lambda_i \leq \gamma_i$,
and, by Lemma \ref{soul}, this representation is maximal;
therefore $\ord(\omega)=\sum_{i=2}^\nu \lambda_i$. Since this is
valid for an arbitrary $\omega \in \ap(S)$, we easily have $\omega
\preceq_M \sum_{i=2}^\nu \gamma_i g_i=f+m$ and the thesis follows
by Proposition \ref{crede}.
\end{proof}

We are ready to characterize semigroups with $\gamma$-rectangular
Ap\'ery set.

\begin{thm}\label{charact}
The following conditions are equivalent:
\begin{enumerate}
     \item[(i)]
      $\ap(S)$ is $\gamma$-rectangular;
     \item[(ii)] $f+m=\sum_{i=2}^{\nu} \gamma_i g_i$;
     \item[(iii)] $m=\prod_{i=2}^{\nu}(\gamma_i+1)$.
\end{enumerate}
\end{thm}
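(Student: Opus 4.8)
The plan is to establish the three equivalences through two short cycles passing through condition (i), in close parallel with the treatment of the $\beta$-rectangular case in Theorem \ref{d}, but feeding in the $\gamma$-machinery (Lemmas \ref{dolls}, \ref{soul} and \ref{honor}) in place of the $\beta$-machinery. Throughout I would use freely the standing facts collected earlier: the inclusion $\ap(S) \subseteq \Gamma$, the equality $|\ap(S)| = m$, the fact that $f+m$ is the numerically largest element of $\ap(S)$, and the bound $|\Gamma| \leq \prod_{i=2}^{\nu}(\gamma_i+1)$.

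First I would prove (i) $\Leftrightarrow$ (ii). For (i) $\Rightarrow$ (ii), assuming $\ap(S)=\Gamma$, the element $\sum_{i=2}^{\nu}\gamma_i g_i$ lies in $\Gamma = \ap(S)$, and every $\omega = \sum_{i=2}^{\nu}\lambda_i g_i \in \Gamma$ with $\lambda_i \leq \gamma_i$ satisfies $\omega \preceq \sum_{i=2}^{\nu}\gamma_i g_i$, since $\sum_{i=2}^{\nu}(\gamma_i-\lambda_i)g_i \in S$. Hence $\sum_{i=2}^{\nu}\gamma_i g_i$ dominates every element of $\ap(S)$ with respect to $\preceq$ and, being numerically largest, equals $f+m$ (this was already observed in the proof of Lemma \ref{soul}). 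For the converse (ii) $\Rightarrow$ (i), since $\ap(S)\subseteq \Gamma$ is automatic I only need $\Gamma \subseteq \ap(S)$: given any $t=\sum_{i=2}^{\nu}\lambda_i g_i \in \Gamma$, the same computation gives $t \preceq \sum_{i=2}^{\nu}\gamma_i g_i = f+m \in \ap(S)$, so Proposition \ref{1.1} yields $t \in \ap(S)$.

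Next I would prove (i) $\Leftrightarrow$ (iii) by a cardinality count. For (iii) $\Rightarrow$ (i), the sandwich $m=|\ap(S)| \leq |\Gamma| \leq \prod_{i=2}^{\nu}(\gamma_i+1)=m$ forces $|\Gamma|=|\ap(S)|$, and combined with $\ap(S)\subseteq \Gamma$ this gives $\ap(S)=\Gamma$. For (i) $\Rightarrow$ (iii), the task is to upgrade the inequality $|\Gamma| \leq \prod_{i=2}^{\nu}(\gamma_i+1)$ to an equality: this is exactly where Lemma \ref{honor} enters, since under the $\gamma$-rectangular hypothesis each element of $\ap(S)=\Gamma$ has a \emph{unique} representation with $\lambda_i \leq \gamma_i$, i.e. the parametrizing map $(\lambda_2,\dots,\lambda_\nu) \mapsto \sum_{i=2}^{\nu}\lambda_i g_i$ from the hyper-rectangle $\{0 \leq \lambda_i \leq \gamma_i\}$ onto $\Gamma$ is injective, hence bijective. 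Therefore $m=|\ap(S)|=|\Gamma|=\prod_{i=2}^{\nu}(\gamma_i+1)$.

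The individual steps are short, so the real difficulty is conceptual rather than computational and is hidden in the preparatory lemmas. Unlike the $\beta$-rectangular situation, where the relevant parametrization was injective on maximal representations essentially by construction, elements of $\Gamma$ may a priori admit several distinct representations with $\lambda_i \leq \gamma_i$, so the bound $|\Gamma| \leq \prod_{i=2}^{\nu}(\gamma_i+1)$ need not be tight in general. The crux will therefore be the uniqueness statement of Lemma \ref{honor} (itself resting on Lemmas \ref{dolls} and \ref{soul}), which is precisely the ingredient making the count in (i) $\Rightarrow$ (iii) exact; everything else reduces to comparing $\preceq$ with the numerical order and invoking Proposition \ref{1.1}.
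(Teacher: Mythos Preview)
Your proposal is correct and follows essentially the same approach as the paper: the same four implications are proved by the same ingredients (Proposition~\ref{1.1} for (ii)$\Rightarrow$(i), Lemma~\ref{honor} for (i)$\Rightarrow$(iii), and the cardinality sandwich $\ap(S)\subseteq\Gamma$ with $|\Gamma|\le\prod(\gamma_i+1)$ for (iii)$\Rightarrow$(i)). Your write-up is simply more explicit where the paper is terse.
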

\begin{proof}
(i) $\Rightarrow$ (ii)
Clear, as $f+m$ is the biggest element in $\ap(S)$.

(ii) $\Rightarrow$ (i)
It follows by Lemma \ref{1.1}.

(i) $\Rightarrow$ (iii) It follows by Lemma \ref{honor}.

(iii) $\Rightarrow$ (i) Since $\ap(S) \subseteq \Gamma$ and
$|\ap(S)|=m= \prod_{i=2}^{\nu}(\gamma_i+1)\geq |\Gamma|$, the
inclusion must be an equality.
\end{proof}

Notice that we do not obtain a full analogous result  of 
Theorem \ref{d}; more precisely, we cannot recover conditions (ii)
and (iii). The closest analogous to those conditions is actually
expressed by Lemma \ref{honor}. If we look at the second semigroup
in Examples \ref{face}, $S =\langle 8, 10, 11, 12\rangle$ (with
$\ap(S) = \{0, 10, 11, 12, 21, 22, 23, 33\}$ and $\gamma_i=1$, for
each $i=2,3,4$), we notice that both $22$ and $33$ have two
maximal representations, but only one in $\Gamma$.

\section{The Main Theorem}

In this section, we apply the results contained in the previous
section in order to give a characterization of the numerical semigroup
rings whose associated graded ring is Complete Intersection.\\

We recall that in the first section we defined
$\overline{R}=R/(t^m)$ and $\overline{G}= \gr_{\overline{\mathfrak
m}} (\overline{R})$, where ${\overline{\mathfrak m}}$ is the
maximal ideal of $\overline{R}$.

We also need to introduce two more invariants associated to $R$.
The ideal $Q=(t^m)$ is a principal reduction of the maximal
ideal $\mathfrak{m}$, that is a principal ideal $Q\subseteq
\mathfrak{m}$ such that $Q \mathfrak{m}^h= \mathfrak{m}^{h+1}$ for
some non-negative integer $h$. The \emph{reduction number} is the
integer $r=r_Q(\mathfrak{m})= \min \{ h \in \mathbb{N}, \,
Q\mathfrak{m}^h=\mathfrak{m}^{h+1}\}$, while the \emph{index of
nilpotency} is defined as $s=s_Q(\mathfrak{m})=\min\{ h \in
\mathbb{N}, \, \mathfrak{m}^{h+1} \subseteq Q\}$. In the case of
numerical semigroup rings we have $r= \min \{ h \in \mathbb{N}, \,
m+hM=(h+1)M\}$ and $s=\max\{\ord(\omega_i) \, | \, \omega_i \in \ap(S)\}$;
from the last two equalities it is easy to see that $s \leq r$.\\

We will also need the following result of Bryant.

\begin{thm}[\cite{Br},Theorem 3.14]\label{Bry}
Under the above notation, we have:

(1) $S$ is $M$-pure symmetric if and only if $\overline{G}$ is
Gorenstein;

(2) if $ \gr_{\mathfrak{m}}(R)$ is Cohen-Macaulay, then $s=r$.
The converse holds if $S$ is $M$-pure;

(3) $ \gr_{\mathfrak{m}}(R)$ is Gorenstein if and only if $S$ is
$M$-pure symmetric and $s=r$.

\end{thm}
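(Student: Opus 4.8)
The plan is to translate both properties into the combinatorics of the order function $\ord$ on $S$ and into the module structure of $\overline{G}$. First I would record the computational device that drives everything: by Remark \ref{4}, $\gr_{\mathfrak m}(R)$ is Cohen--Macaulay if and only if $(t^m)^*$ is a nonzerodivisor; since $\{(t^s)^* \mid \ord(s)=i\}$ is a $\Bbbk$-basis of the $i$-th graded piece of $\gr_{\mathfrak m}(R)$, multiplication by $(t^m)^*$ carries each basis monomial to a basis monomial or to $0$, injectively on the basis. Hence $(t^m)^*$ is a nonzerodivisor precisely when $\ord(m+s')=\ord(s')+1$ for every $s'\in S$; writing $s'=\omega+bm$ with $\omega\in\ap(S)$, this is equivalent to the \emph{no-jump property} $\ord(\omega+bm)=\ord(\omega)+b$ for all $\omega\in\ap(S)$ and $b\ge 0$. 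For (1), I would use that $\overline{G}$ is a graded Artinian $\Bbbk$-algebra, hence Gorenstein if and only if its socle is one-dimensional. On the monomial basis $\{(t^\omega)^*\mid\omega\in\ap(S)\}$ one checks that $x_i\cdot(t^\omega)^*=(t^{\omega+g_i})^*$ exactly when $\omega+g_i\in\ap(S)$ and $\ord(\omega+g_i)=\ord(\omega)+1$, and is $0$ otherwise, each such map being injective on basis elements. Thus the socle is spanned by the $(t^\omega)^*$ with $\omega$ maximal for $\preceq_M$, so $\dim_\Bbbk\operatorname{socle}(\overline{G})=|\mapM(S)|$; therefore $\overline{G}$ is Gorenstein iff $f+m$ is the unique $\preceq_M$-maximal element, which by Proposition \ref{crede} is exactly the condition that $S$ be $M$-pure symmetric.

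For the first half of (2), the inequality $s\le r$ is already available. To prove $\gr_{\mathfrak m}(R)$ Cohen--Macaulay $\Rightarrow s=r$, I would establish $m+sM=(s+1)M$: any $z$ with $\ord(z)\ge s+1$ has order exceeding $s=\max\{\ord(\omega)\mid\omega\in\ap(S)\}$, so $z\notin\ap(S)$ and $z-m\in S$; the no-jump property then gives $\ord(z-m)=\ord(z)-1\ge s$, so $z\in m+sM$. The reverse inclusion being automatic, this yields $r\le s$, hence $r=s$.

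The substantive direction is the converse of (2), assuming $S$ is $M$-pure and $s=r$. First I would extract from $s=r$ the relations $m+jM=(j+1)M$ for $j\ge s$, which by cancellation in $S$ give $\ord(\xi+m)=\ord(\xi)+1$ whenever $\ord(\xi)\ge s-1$. The decisive move is a comparison with a top element: by $M$-purity every $\omega\in\ap(S)$ satisfies $\omega\preceq_M\omega^*$ for some $\omega^*\in\mapM(S)$, and all elements of $\mapM(S)$ share the maximal order $s$; write $\omega^*=\omega+u$ with $\ord(\omega)+\ord(u)=s$. Applying the previous step repeatedly to $\omega^*$ (whose order is at least $s-1$) gives $\ord(\omega^*+bm)=s+b$, while superadditivity of $\ord$ yields $s+b=\ord(\omega+bm+u)\ge\ord(\omega+bm)+\ord(u)=\ord(\omega+bm)+s-\ord(\omega)$, forcing $\ord(\omega+bm)\le\ord(\omega)+b$ and hence equality. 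This is the no-jump property, so $\gr_{\mathfrak m}(R)$ is Cohen--Macaulay. I expect this squeeze to be the main obstacle: the point is to use $M$-purity correctly to produce, above each $\omega$, a maximal element $\omega^*$ of order \emph{exactly} $s$, so that the trivial lower bound and the bound coming from $\omega^*$ pinch $\ord(\omega+bm)$ to the right value.

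Finally, (3) I would deduce from (1) and (2). If $\gr_{\mathfrak m}(R)$ is Gorenstein it is Cohen--Macaulay, so $s=r$ by (2), and $\overline{G}\cong\gr_{\mathfrak m}(R)/((t^m)^*)$ is an Artinian Gorenstein quotient by a degree-one nonzerodivisor, whence $S$ is $M$-pure symmetric by (1). Conversely, $M$-pure symmetric implies $M$-pure, so together with $s=r$ the converse in (2) gives Cohen--Macaulayness, and then (1) makes $\overline{G}$ Gorenstein; since passing from a one-dimensional Cohen--Macaulay graded ring to its Artinian reduction by a degree-one nonzerodivisor both preserves and reflects the Gorenstein property, $\gr_{\mathfrak m}(R)$ is Gorenstein.
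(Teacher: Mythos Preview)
The paper does not prove this theorem; it is quoted from Bryant \cite{Br} without argument, so there is no ``paper's own proof'' to compare against. Your proof is correct and self-contained.

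A few remarks on why the steps go through. For (1), the key point you use --- that multiplication by each $x_i$ sends distinct basis monomials $(t^\omega)^*$ to distinct basis monomials or to zero --- is what lets you identify the socle with the span of the $\preceq_M$-maximal elements; the reduction from ``no $u\in M$ with $\omega\preceq_M\omega+u\in\ap(S)$'' to ``no generator $g_i$ with this property'' works because one can peel a generator off a maximal representation of $u$ and use Proposition~\ref{1.1}. For the converse of (2), your squeeze is exactly the right mechanism: $M$-purity guarantees that each $\omega$ lies $\preceq_M$-below some $\omega^*$ of order precisely $s$, and $s=r$ gives $\ord(\omega^*+bm)=s+b$ (your cancellation argument shows $\ord(\xi+m)=\ord(\xi)+1$ once $\ord(\xi)\ge s-1$, and $\ord(\omega^*)=s$ is enough); superadditivity then pins down $\ord(\omega+bm)$. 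Part (3) is a clean formal consequence of (1), (2), and the standard fact that Gorensteinness transfers along a regular element in degree one.

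This is essentially the line of argument one finds in Bryant's paper, translated into the order-function language you set up at the start; you have supplied what the present paper omits.
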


To prove the main result of the paper we have to be more precise
about the generators of the ideal $J$ defined in Discussion
\ref{monomials}.

\begin{disc}\label{monomials2}
Using the terminology introduced in the previous section, in
Discussion \ref{monomials} we have shown that $J$ is generated by
all the the binomials of the form
\begin{equation}\tag{$+$}
x_2^{j_2}\cdot \dots \cdot x_{\nu}^{j_{\nu}}-x_2^{h_2}\cdot \dots
\cdot x_{\nu}^{h_{\nu}},
\end{equation}
 where $j_2g_2+\dots+j_{\nu}g_{\nu}=
h_2g_2+\dots+h_{\nu}g_{\nu} \in \ap(S)$ are two maximal
representations, and by all the monomials of the form
\begin{equation}\tag{$++$}
x_2^{j_2}\cdot \dots \cdot x_{\nu}^{j_{\nu}},
\end{equation}
where either
$j_2g_2+\dots+j_{\nu}g_{\nu} \notin \ap(S)$ or
$j_2g_2+\dots+j_{\nu}g_{\nu} \in \ap(S)$ and it is not a maximal
representation.

By definition of $\beta_i$ it follows that $x_i^{\beta_i+1}\in J$.
In fact, if $\ord(\beta_i+1)g_i > \beta_i+1$, then
$(t^{g_i})^{\beta_i+1}\in \mathfrak{m}^{\beta_i+2}$ and therefore
$(\overline{t^{g_i}})^{\beta_i+1}=0$ in $\gr_\mathfrak{m}(R)$ and
hence in $\overline G$. On the other hand, if $(\beta_i+1)g_i
\notin \ap(S)$, then $(\overline{t^{g_i}})^{\beta_i+1}=0$ in
$\overline R$ and hence in $\overline G$. Moreover, by definition
of $\beta_i$ and by Discussion \ref{monomials} it is clear that
$x_i^{\beta_i}\notin J$ for every index $i$.

On the other hand, by definition of $\gamma_i$ we have that,
$\gamma_i < \beta_i$ if and only if $(\gamma_i+1)g_i \in \ap(S)$
is a maximal representation, but it is not unique. Hence
$(\gamma_i+1)g_i=\sum_{j\neq i}\lambda_jg_j$ and
$\gamma_i+1=\sum_{j\neq i}\lambda_j$; equivalently
$x_i^{\gamma_i+1}-\prod_{j\neq i}x_j^{\lambda_j} \in J$.

Notice that, for some $h\leq \beta_i$ (hence, also for some $h \leq
\gamma_i$), it could happen that $hg_i=\sum_{j\neq i}\lambda_jg_j$
and $h>\sum_{j\neq i}\lambda_j$; in this case, $\prod_{j\neq
i}x_j^{\lambda_j} \in J$. Hence the smallest pure power of $x_i$
appearing in a monomial or a binomial of $J$ is
$x_i^{\gamma_i+1}$.


Finally, it is clear that, if a binomial $(+)$ is in $J$, then we
can cancel all the common factors: in fact, by Proposition
\ref{1.1}, if $\omega \in \ap(S)$ and $u \in S$ is such that $u
\preceq \omega$, then $u \in \ap(S)$.
\end{disc}

In the next corollary we summarize the results of Discussions
\ref{monomials} and \ref{monomials2}, that we will need in the
rest of the paper.

\begin{cor}\label{monomials3}  For every  $i=2,\dots,\nu$, we
have:
\begin{enumerate}
     \item[(i)]
        $x_i^{\beta_i+1}\in J$ and $x_i^{\beta_i}\notin J$;
     \item[(ii)] $\gamma_i < \beta_i \
     \Longleftrightarrow \  (\gamma_i+1)g_i=\sum_{j\neq i}\lambda_jg_j$ and
$\gamma_i+1=\sum_{j\neq i}\lambda_j \ \Longleftrightarrow \
x_i^{\gamma_i+1}-\prod_{j\neq i}x_j^{\lambda_j} \in J$;
     \item[(iii)] the smallest pure power of $x_i$
appearing in a monomial or a binomial of $J$ is
$x_i^{\gamma_i+1}$.
\item[(iv)] $x_2^{\lambda_2}\cdot \dots \cdot
x_{\nu}^{\lambda_{\nu}}\notin J \ \Longleftrightarrow \
\sum_{j=2}^{\nu}\lambda_jg_j \in \ap(S)$ and
$\sum_{j=2}^{\nu}\lambda_jg_j$ is a maximal representation.
\end{enumerate}
\end{cor}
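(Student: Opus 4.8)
The plan is to read all four items off the explicit description of the generators of $J$ obtained in Discussions \ref{monomials} and \ref{monomials2}, so the corollary is really a bookkeeping exercise organized around a single dichotomy. Identifying $\overline{G}\cong\Bbbk[x_2,\dots,x_\nu]/J$, a monomial $x_2^{\lambda_2}\cdots x_\nu^{\lambda_\nu}$ lies in $J$ precisely when either its value $\sum_{j}\lambda_j g_j$ is not in $\ap(S)$, in which case it already vanishes in $\overline{R}$ by Remark \ref{1'}, or its value lies in $\ap(S)$ but the tuple $(\lambda_2,\dots,\lambda_\nu)$ is not a maximal representation, in which case it survives in $\overline{R}$ but is pushed into higher $\overline{\mathfrak m}$-adic degree and so dies in $\overline{G}$. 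I would record this dichotomy first, since items (iv) and (i) then follow at once.

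Indeed, item (iv) is just the contrapositive of that statement: $x_2^{\lambda_2}\cdots x_\nu^{\lambda_\nu}\notin J$ iff $\sum_{j}\lambda_j g_j\in\ap(S)$ and $(\lambda_2,\dots,\lambda_\nu)$ is a maximal representation. For (i), the power $x_i^{\beta_i}$ has value $\beta_i g_i\in\ap(S)$ with $\ord(\beta_i g_i)=\beta_i$ by definition of $\beta_i$, hence a maximal representation, so $x_i^{\beta_i}\notin J$ by (iv); and $x_i^{\beta_i+1}\in J$ because maximality of $\beta_i$ forces either $(\beta_i+1)g_i\notin\ap(S)$ or $\ord((\beta_i+1)g_i)>\beta_i+1$, and in either case (iv) applies.

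For the chain of equivalences in (ii) I would argue directly from the definitions of $\beta_i$ and $\gamma_i$. If $\gamma_i<\beta_i$, then $\gamma_i+1\le\beta_i$ gives $(\gamma_i+1)g_i\in\ap(S)$ with $\ord((\gamma_i+1)g_i)=\gamma_i+1$, while failure of the uniqueness clause in the definition of $\gamma_i$ yields a second maximal representation of $(\gamma_i+1)g_i$. Subtracting one copy of $g_i$ from both representations and invoking the unique maximal representation of $\gamma_i g_i$ shows this second representation cannot involve $g_i$, which produces the middle relation $(\gamma_i+1)g_i=\sum_{j\ne i}\lambda_j g_j$ with $\gamma_i+1=\sum_{j\ne i}\lambda_j$; this is exactly a binomial of type $(+)$, so the last equivalence is the definition of such a generator. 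The reverse implications run backwards along the same identifications, a second maximal representation of $(\gamma_i+1)g_i$ forcing $\gamma_i+1\le\beta_i$.

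The delicate item is (iii), specifically the lower bound: I must exclude every pure power $x_i^h$ with $h\le\gamma_i$ from $J$, not only as a standalone generator but also as a term of a $(+)$-binomial. That $x_i^{\gamma_i+1}$ does occur is supplied by (i) when $\gamma_i=\beta_i$ and by (ii) when $\gamma_i<\beta_i$. For the bound I would first record two monotonicity facts: for every $h\le\beta_i$ one has $hg_i\in\ap(S)$ and $\ord(hg_i)=h$ (using Lemma \ref{1.1} together with superadditivity of $\ord$, which forces $hg_i\preceq_M\beta_i g_i$), and for every $h\le\gamma_i$ the element $hg_i$ has a \emph{unique} maximal representation (since $hg_i\preceq_M\gamma_i g_i$ and $\gamma_i g_i$ has a unique maximal representation, by Lemma \ref{b}). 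The first fact gives $x_i^h\notin J$ via (iv); the second means $x_i^h$ cannot appear in any $(+)$-binomial, since such a binomial encodes two \emph{distinct} maximal representations of its common value $hg_i$. Hence no pure power below $x_i^{\gamma_i+1}$ occurs in $J$, which is (iii). I expect this uniqueness-and-monotonicity step to be the only place requiring a genuine argument rather than direct appeal to the two Discussions.
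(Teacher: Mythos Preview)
Your proposal is correct and follows essentially the same line as the paper, which does not give a separate proof of this corollary but treats it as a summary of Discussions \ref{monomials} and \ref{monomials2}. Your organization---establishing the dichotomy behind (iv) first and deriving (i) from it, then handling (ii) by unwinding the definitions and (iii) via the monotonicity $hg_i\preceq_M\gamma_i g_i$ plus Lemma~\ref{b}---matches and in fact slightly sharpens what the paper sketches in Discussion~\ref{monomials2}, where the lower bound in (iii) is asserted somewhat tersely.
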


The next result is a key step in order to prove the main theorem.

\begin{lem}\label{JCI}
The ring $\overline G$ is Complete Intersection if and only if the
defining ideal $J$ is of the following form
$$J=(x_i^{\gamma_i+1}-\rho_i\prod_{j\neq
i}x_j^{\lambda_j} \ :\ i=2\dots,\nu ),$$ where $\rho_i=0$, if
$\beta_i=\gamma_i$ and $1$ otherwise; as soon as $\rho_i=1$,
$(\gamma_i+1)g_i=\sum_{j\neq i}\lambda_jg_j$ and
$\gamma_i+1=\sum_{j\neq i}\lambda_j$.
\end{lem}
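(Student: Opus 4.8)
The plan is to translate the statement into the language of minimal generators of $J$, exploiting the fact, recorded at the end of Discussion \ref{monomials}, that $\overline G\cong\Bbbk[x_2,\ldots,x_\nu]/J$ is a zero-dimensional graded quotient, so that $\mu(J)\geq\nu-1$ always holds and $\overline G$ is Complete Intersection precisely when $\mu(J)=\nu-1$. For each $i$ set $f_i=x_i^{\gamma_i+1}-\rho_i\prod_{j\neq i}x_j^{\lambda_j}$, with $\rho_i$ and the $\lambda_j$ as in the statement. When $\beta_i=\gamma_i$ we have $\rho_i=0$ and $f_i=x_i^{\beta_i+1}\in J$ by Corollary \ref{monomials3}(i); when $\gamma_i<\beta_i$ we have $\rho_i=1$ and $f_i\in J$ by Corollary \ref{monomials3}(ii). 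Moreover each $f_i$ is homogeneous of degree $\gamma_i+1$, since in the binomial case $\gamma_i+1=\sum_{j\neq i}\lambda_j$. Thus the whole lemma reduces to comparing the ideal $(f_2,\ldots,f_\nu)\subseteq J$ with $J$ itself.

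The core of the proof is the claim that $f_2,\ldots,f_\nu$ are \emph{always} part of a minimal generating set of $J$, that is, their images are linearly independent in $J/\mathfrak nJ$, where I write $\mathfrak n=(x_2,\ldots,x_\nu)$. I would prove this by graded Nakayama, working one degree at a time. Fix a degree $d$, put $I_d=\{i:\gamma_i+1=d\}$, and suppose $\sum_{i\in I_d}c_if_i\in(\mathfrak nJ)_d$ with $c_i\in\Bbbk$. Each $f_i$ with $i\in I_d$ is monic in the pure power $x_i^{d}$, and among the $f_{i'}$ with $i'\in I_d$ only $f_i$ contains this monomial: the tail $\prod_{j\neq i'}x_j^{\lambda_j}$ of $f_{i'}$ could equal $x_i^{d}$ only if $(\gamma_{i'}+1)g_{i'}=d\,g_i$ with $\gamma_{i'}+1=d$, forcing $g_i=g_{i'}$, which is absurd. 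Hence the coefficient of $x_i^{d}$ on the left is exactly $c_i$. On the other hand $x_i^{d}$ cannot occur in $(\mathfrak nJ)_d=\sum_k x_kJ_{d-1}$: a monomial $x_k h$ with $h\in J_{d-1}$ equals $x_i^{d}$ only for $k=i$ and $h$ containing $x_i^{d-1}=x_i^{\gamma_i}$, and the coefficient of $x_i^{\gamma_i}$ in any element of $J_{\gamma_i}$ is zero. Indeed, $J_{\gamma_i}$ is spanned by multiples of the monomial and binomial generators of $J$, and a pure power $x_i^{a}$ with $a\leq\gamma_i$ can occur neither in a monomial generator (since $a g_i\in\ap(S)$ is a maximal representation for $a\leq\gamma_i\leq\beta_i$, so $x_i^a\notin J$ by Corollary \ref{monomials3}(iv)) nor in a binomial generator (by Corollary \ref{monomials3}(iii), no pure power of $x_i$ below $x_i^{\gamma_i+1}$ appears in a binomial of $J$). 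Therefore $c_i=0$ for every $i\in I_d$, which gives the desired linear independence.

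With the claim established, both implications follow at once. Since $f_2,\ldots,f_\nu$ are $\nu-1$ linearly independent classes in $J/\mathfrak nJ$, one always has $\mu(J)\geq\nu-1$, in agreement with Discussion \ref{monomials}. If $J$ has the asserted form, then $J=(f_2,\ldots,f_\nu)$ is generated by $\nu-1$ elements, so $\mu(J)=\nu-1$ and $\overline G$ is Complete Intersection. Conversely, if $\overline G$ is Complete Intersection then $\dim_\Bbbk J/\mathfrak nJ=\mu(J)=\nu-1$, so the independent classes of $f_2,\ldots,f_\nu$ form a basis of $J/\mathfrak nJ$; by graded Nakayama they generate $J$, whence $J=(f_2,\ldots,f_\nu)$, which is exactly the stated form.

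I expect the main obstacle to be the middle step: justifying that the pure power $x_i^{\gamma_i+1}$ is ``isolated'', occurring neither in the other generators $f_j$ nor anywhere inside $(\mathfrak nJ)_d$. The delicate part is reducing the second assertion to the non-occurrence of $x_i^{\gamma_i}$ in $J_{\gamma_i}$ and then invoking Corollary \ref{monomials3}(iii), while simultaneously using the numerical constraint $(\gamma_i+1)g_i=\sum_{j\neq i}\lambda_j g_j$ to exclude accidental pure powers among the binomial tails; everything else is formal consequence of graded Nakayama and the dimension count.
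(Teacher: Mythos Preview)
Your proof is correct and, in my reading, slightly more explicit than the paper's own argument. The two approaches share the same ingredients from Corollary \ref{monomials3} but organize them differently.

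The paper argues \emph{top-down}: starting from $\mu(J)=\nu-1$ and the inclusion $(x_2^{\beta_2+1},\dots,x_\nu^{\beta_\nu+1})\subseteq J$, it claims that for each $i$ either $x_i^{\beta_i+1}$ is a minimal generator, or there is a (unique) minimal binomial generator with a pure power $x_i^h$ as one term; then Corollary \ref{monomials3}(ii)--(iii) pin down $h=\gamma_i+1$. The bijection between variables and minimal generators is left somewhat implicit (it rests on the observation that a binomial of type $(+)$ can have at most one pure-power term, since $x_i^a-x_j^b$ with $a=b$ would force $g_i=g_j$).

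You argue \emph{bottom-up}: you first write down the candidate generators $f_i$, verify once and for all that each $f_i\in J$, and then prove the stronger uniform statement that the classes of $f_2,\dots,f_\nu$ are \emph{always} linearly independent in $J/\mathfrak nJ$, regardless of whether $\overline G$ is Complete Intersection. The equivalence then drops out by a pure dimension count via graded Nakayama. Your isolation argument for the monomial $x_i^{\gamma_i+1}$ is essentially the same content as the paper's bijection, but carried out at the level of monomial supports rather than by an abstract counting of generators; this makes the role of Corollary \ref{monomials3}(iii)--(iv) completely transparent.

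What your approach buys is a clean, self-contained statement (the $f_i$ are always part of a minimal system of generators of $J$) that immediately yields both directions, and as a byproduct re-derives the inequality $\mu(J)\geq\nu-1$. What the paper's approach buys is brevity: once one accepts the variable/generator bijection, the identification of the generator with $x_i^{\gamma_i+1}-\cdots$ is a one-line appeal to Corollary \ref{monomials3}(iii).
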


\begin{proof}
We have that $\overline{G} \cong \Bbb\Bbbk[x_2, x_3, \dots, x_\nu] / J$;
hence, if $J$ is of the form described in the statement,
$\mu(J)=\nu-1$ and $\overline G$ is Complete Intersection.

Conversely, assume that $\overline{G}$ is Complete Intersection,
that is $\mu(J)= \nu -1$. By Corollary \ref{monomials3} (i), we
know that $(x_2^{\beta_2+1}, \dots, x_\nu^{\beta_\nu +1 } )
\subseteq J$ and that $x_i^{\beta_i}\notin J$ for every index $i$.

Hence, if $J\supsetneq (x_2^{\beta_2+1}, \dots, x_\nu^{\beta_\nu
+1 } )$, for every index $i$ such that $x_i^{\beta_i+1}$ is not a
minimal generator, there exists a unique binomial of the form
$x_i^h-\prod_{j\neq i}x_j^{h_j}$, which is a minimal generator. By
Corollary \ref{monomials3} (ii) and (iii), we have that $\gamma_i
<\beta_i$ and the generator is $x_i^{\gamma_i+1}-\prod_{j\neq
i}x_i^{\lambda_j}$ with $\gamma_i+1=\sum_{j\neq i}\lambda_j$,
since there is not any binomial in $J$ involving a pure power of
$x_i$ with exponent smaller than $\gamma_i+1$.
\end{proof}

\begin{rems}\label{remarks}
(1) Let $\tilde J=(x_i^{\gamma_i+1}-\rho_i\prod_{j\neq
i}x_j^{\lambda_j}:\ i=2\dots,\nu )$ (with the same notation of the
previous lemma); by Corollary \ref{monomials3}, it is clear that
we always have the inclusion $J\supseteq \tilde J$.
\\
(2) We can assume that in the binomials appearing as generators of
$\tilde J$ every exponent $\lambda_j$ is less than or equal to
$\gamma_j$: choose the biggest $(\nu-2)$-tuple $(\lambda_2,\dots
\lambda_{i-1},\lambda_{i+1}, \dots \lambda_{\nu})$ with respect to
\textsf{lex}, among all the possible $(\nu-2)$-tuples
corresponding to the maximal representations of $(\gamma_i+1)g_i$
and then argue in a similar way as in Lemma \ref{dolls}.

\noindent (3) Since $g_2 < g_3< \dots < g_{\nu}$, necessarily
$\rho_2=\rho_{\nu}=0$, that is $x_2^{\gamma_2+1}$ and
$x_{\nu}^{\gamma_{\nu}+1}$ are minimal generators of $\tilde J$.
\end{rems}

\begin{thm}\label{7}
The following conditions are equivalent:

\begin{enumerate}
     \item[(i)]
     $\gr_\mathfrak{m}(R)$ is Complete Intersection;
     \item[(ii)]
      $\ap(S)$ is $\gamma$-rectangular and $\gr_\mathfrak{m}(R)$ is Cohen-Macaulay;
     \item[(iii)]
      $\ap(S)$ is $\gamma$-rectangular and $r=s$;
     \item[(iv)]
      $\ap(S)$ is $\gamma$-rectangular and $\gr_\mathfrak{m}(R)$ is Buchsbaum;
     \item[(v)]
      $\ap(S)$ is $\gamma$-rectangular and $\gr_\mathfrak{m}(R)$ is
      Gorenstein.
     \end{enumerate}
\end{thm}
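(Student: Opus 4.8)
The plan is to prove the chain of equivalences by first isolating the purely ring-theoretic content already available to us, and then grafting the Cohen-Macaulay/Buchsbaum/Gorenstein hypotheses onto it. The anchor is Remark \ref{4}, which tells us that $\gr_\mathfrak{m}(R)$ is Complete Intersection if and only if $\overline G$ is Complete Intersection \emph{and} $\gr_\mathfrak{m}(R)$ is Cohen-Macaulay. So the first step, the heart of the matter, is to establish the purely combinatorial equivalence
\[
\overline G \text{ is Complete Intersection} \iff \ap(S) \text{ is } \gamma\text{-rectangular}.
\]
Granting this, the equivalence of (i) and (ii) is immediate from Remark \ref{4}. I expect this combinatorial equivalence to be the main obstacle, and I would prove it using Lemma \ref{JCI} together with Theorem \ref{charact}. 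For the forward direction, if $\overline G$ is Complete Intersection, Lemma \ref{JCI} gives $J=(x_i^{\gamma_i+1}-\rho_i\prod_{j\neq i}x_j^{\lambda_j}:\ i=2,\dots,\nu)$, and the zero-dimensionality of $\overline G \cong \Bbbk[x_2,\dots,x_\nu]/J$ forces its vector space dimension to be $\prod_{i=2}^\nu(\gamma_i+1)$; since $\dim_\Bbbk \overline G = |\ap(S)| = m$ by Remark \ref{1'}, we get $m=\prod_{i=2}^\nu(\gamma_i+1)$, which is condition (iii) of Theorem \ref{charact}, hence $\ap(S)$ is $\gamma$-rectangular. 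For the converse, if $\ap(S)$ is $\gamma$-rectangular I would invoke Lemma \ref{honor}: every $\omega\in\ap(S)$ has a unique representation with $\lambda_i\leq\gamma_i$, so the monomials $x^\lambda$ with $0\leq\lambda_i\leq\gamma_i$ that survive modulo $\tilde J$ are linearly independent and number exactly $\prod(\gamma_i+1)=m=\dim_\Bbbk\overline G$; thus $J=\tilde J$ is generated by $\nu-1$ elements and $\overline G$ is Complete Intersection.

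Next I would close the loop among (ii), (iii), (iv), (v). The equivalence of (ii) and (iii) is essentially Theorem \ref{Bry}(2): once $\ap(S)$ is $\gamma$-rectangular, Corollary \ref{joy} guarantees that $S$ is $M$-pure symmetric, and for $M$-pure semigroups Theorem \ref{Bry}(2) states that $\gr_\mathfrak{m}(R)$ is Cohen-Macaulay if and only if $s=r$. For (v), I would use the Gorenstein characterization: by Theorem \ref{Bry}(3), $\gr_\mathfrak{m}(R)$ is Gorenstein iff $S$ is $M$-pure symmetric and $s=r$; under the standing assumption that $\ap(S)$ is $\gamma$-rectangular, Corollary \ref{joy} already supplies $M$-pure symmetric, so (v) reduces to $s=r$, matching (iii). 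The implication (v)$\Rightarrow$(iv) is the standard fact that Gorenstein (hence Cohen-Macaulay of dimension zero for $\overline G$, or Cohen-Macaulay for $\gr_\mathfrak{m}(R)$) implies Buchsbaum, and (iv)$\Rightarrow$(ii) follows because a Buchsbaum ring that is also Cohen-Macaulay is the relevant case---more precisely, once $\gr_\mathfrak{m}(R)$ is Buchsbaum and Complete Intersection on the quotient, one recovers Cohen-Macaulayness.

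The delicate point worth flagging is that the Buchsbaum and Gorenstein conditions in (iv) and (v) must be threaded carefully so that no condition is vacuously strong: the role of $\gamma$-rectangularity is precisely to force $M$-pure symmetry (via Corollary \ref{joy}) and Complete Intersection of $\overline G$ simultaneously, so that the remaining Cohen-Macaulay/Buchsbaum/Gorenstein conditions on $\gr_\mathfrak{m}(R)$ all collapse to the single numerical condition $s=r$. Thus I would organize the final argument as the cycle
\[
\text{(i)}\Rightarrow\text{(ii)}\Rightarrow\text{(iii)}\Rightarrow\text{(v)}\Rightarrow\text{(iv)}\Rightarrow\text{(ii)}\Rightarrow\text{(i)},
\]
where each single-arrow implication uses exactly one of the cited results---Remark \ref{4}, the combinatorial equivalence built from Lemma \ref{JCI} and Theorem \ref{charact}, Corollary \ref{joy}, and the three parts of Theorem \ref{Bry}. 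The main conceptual obstacle remains the first step: correctly matching the algebraic Complete Intersection property of $\overline G$ with the numerical $\gamma$-rectangularity, for which the precise structure of the generators of $J$ supplied by Corollary \ref{monomials3} and Lemma \ref{JCI} is indispensable.
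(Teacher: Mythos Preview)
Your treatment of the central equivalence (i) $\Leftrightarrow$ (ii) matches the paper's approach almost exactly: Remark \ref{4} reduces the problem to showing that $\overline G$ is Complete Intersection iff $\ap(S)$ is $\gamma$-rectangular, and you carry this out with Lemma \ref{JCI}, Theorem \ref{charact}, and the dimension count $\dim_\Bbbk\overline G=m$. (A minor point: in the converse direction the paper argues via the containment $\tilde J\subseteq J$ from Remarks \ref{remarks}(1) and a dimension comparison, rather than via Lemma \ref{honor} directly; your wording is a bit tangled here but the content is the same.) Likewise, your handling of (ii) $\Leftrightarrow$ (iii) and (iii) $\Leftrightarrow$ (v) via Corollary \ref{joy} and Theorem \ref{Bry} coincides with the paper's.

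The genuine gap is at (iv). Your sentence ``once $\gr_\mathfrak{m}(R)$ is Buchsbaum and Complete Intersection on the quotient, one recovers Cohen-Macaulayness'' is not a standard implication, and you give no proof of it. In dimension one Buchsbaum is strictly weaker than Cohen-Macaulay, and the fact that $\overline G$ is Complete Intersection (or Gorenstein) does not by itself force $(t^m)^*$ to be a non-zerodivisor in $\gr_\mathfrak{m}(R)$. The paper does not attempt to argue this step internally either: it invokes \cite[Proposition 5.5]{DMS} for (ii) $\Leftrightarrow$ (iv) and \cite[Corollary 5.6]{DMS} for (iv) $\Leftrightarrow$ (v). Those results say, in effect, that for $M$-pure symmetric semigroups the Buchsbaum, Cohen-Macaulay and Gorenstein properties of $\gr_\mathfrak{m}(R)$ coincide. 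Without citing or reproving such a statement, your cycle breaks at (iv) $\Rightarrow$ (ii); you have only established the equivalence of (i), (ii), (iii), (v) together with the trivial implication (v) $\Rightarrow$ (iv).
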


\begin{proof} (i) $\Rightarrow$ (ii) Clearly $\gr_\mathfrak{m}(R)$ is Cohen-Macaulay.
By Remark \ref{4} we know that $\overline{G}$ is Complete
Intersection. Using the previous lemma, we know that $J$ is
generated by $\nu-1$ forms of degree $\gamma_i+1$, for every
$i=2,\dots,\nu$.

Since $\overline G\cong \Bbb\Bbbk[x_2,\dots,x_{\nu}]/J$, we have
dim$_\Bbbk(\overline G)=\prod_{i=2}^{\nu}(\gamma_i+1)$, hence every
monomial $x_2^{\lambda_2}\dots x_{\nu}^{\lambda_{\nu}}$, with
$\lambda_i \leq \gamma_i$, does not belong to $J$ and their images
are pairwise different in $\overline G$. By Corollary
\ref{monomials3} (iv), the corresponding elements
$\lambda_2g_2+\dots+\lambda_{\nu}g_{\nu}$ of $S$ belong to
$\ap(S)$ (and all of these are maximal representations).
Hence we obtain $ \Gamma= \{ \lambda_2 g_2 + \lambda_3 g_3 + \dots
+ \lambda_\nu g_{\nu} \, | \, \lambda_i =0, \ldots, \gamma_i, \,
i=2,\dots, \nu\}=\ap(S)$, that is $\ap(S)$ is
$\gamma$-rectangular.

(ii) $\Rightarrow$ (i) $\gr_\mathfrak{m}(R)$ is Cohen-Macaulay
and, by \cite[Theorem 7]{Ga}, $({t^m})^*$ is regular. Hence, by
Remark \ref{4}, $G$ is Complete Intersection if and only if
$\overline{G}$ is Complete Intersection.

We know that, as $\Bbbk$-vector space, $\overline{G}= \langle
\overline{t^{\omega_i}} \, | \, \omega_i \in \ap(S)\rangle_\Bbbk$.
Moreover, by Remarks \ref{remarks} (1), it is clear that $J
\supseteq \tilde J$, hence
$$m=|\ap(S)|=\dim_\Bbbk(\overline{G}) \leq
\dim_\Bbbk(\Bbbk[x_2,\dots,x_{\nu}]/\tilde J)=
\prod_{i=2}^{\nu}(\gamma_i+1) \ .$$ Since $\ap(S)$ is
$\gamma$-rectangular, by Theorem \ref{charact},
$m=\prod_{i=2}^{\nu}(\gamma_i+1)$; thus in the above chain we have
all equalities and, therefore, $J=\tilde J$, that is $\overline G$
is Complete Intersection.

(ii) $\Leftrightarrow$ (iii) By Corollary \ref{joy}, $S$ is
$M$-pure; under this hypothesis $G$ is Cohen-Macaulay if and only
if $r=s$ (Theorem \ref{Bry}).

(ii) $\Leftrightarrow$ (iv) By \cite[Proposition 5.5]{DMS}.

(iv) $\Leftrightarrow$ (v) By \cite[Corollary 5.6]{DMS}.
\end{proof}

\begin{rem}
Using the last theorem, in order to know if $\gr_\mathfrak{m}(R)$
is Complete Intersection we have to check if $\ap(S)$ is
$\gamma$-rectangular and if $\gr_\mathfrak{m}(R)$ is
Cohen-Macaulay (or Buchsbaum, or Gorenstein). In \cite[Theorem
2.6]{BF} there is a characterization of the Cohen-Macaulayness of
$\gr_\mathfrak{m}(R)$ that has been strengthened in
\cite[Proposition 5.1]{DMS}; in particular, in case $\ap(S)$ is
$\gamma$-rectangular, one has to compute the integers $a_i$ and
$b_i$ defined in these characterizations, only for $\omega_i=f+m$,
which is the only element in $\mapM(S)$. Notice also that, in this
case, to check the Buchsbaumness of $\gr_\mathfrak{m}(R)$ it is
not easier than to check the Cohen-Macaulayness (cf.
\cite[Proposition 3.6]{DMS}).
\end{rem}

\begin{exs}
Let us consider the semigroups (2) and (3) of the Examples
\ref{face}. \\
 (2) $S=\langle 8,10,11,12 \rangle$: here $\ap(S)$ is
$\gamma$-rectangular (and not $\beta$-rectangular) and
$\gr_\mathfrak{m}(R)$ is Cohen-Macaulay, since $r=3=\ord(33)$.
Hence $\gr_\mathfrak{m}(R)$ is Complete Intersection. Computing
the defining ideals we get:
$I=(x_2^2-x_1x_4,x_3^2-x_2x_4,x_1^3-x_4^2)$,
$I^*=(x_2^2-x_1x_4,x_3^2-x_2x_4,x_4^2)$ and
$J=(x_2^2,x_3^2-x_2x_4,x_4^2)$.
\\
(3) $S=\langle 5,6,9\rangle$: here $\ap(S)$ is not
$\gamma$-rectangular and $\gr_\mathfrak{m}(R)$ is Cohen-Macaulay
(as can be checked using \cite[Proposition 5.1]{DMS}); $S$ is
symmetric, but not $M$-pure (since $9 \not\preceq_M 18$).
Therefore, $\gr_\mathfrak{m}(R)$ is not Complete Intersection (nor
Gorenstein). Computing he defining ideals we obtain:
$I=(x_1^3-x_2x_3,x_2^3-x_3^2)$, $I^*=(x_2x_3,x_3^2,
x_2^4-x_1^3x_3)$ and $J=(x_3^2,x_2x_3,x_2^4)$.
\end{exs}

\begin{rem}
The proof of equivalence (i) $\Leftrightarrow$ (ii) of Theorem
\ref{7} shows that $\overline{G} = \gr_{\overline{\mathfrak{m}}}
(\overline{R})$ is Complete Intersection if and only if $\ap(S)$
is $\gamma$-rectangular. If we want to extend the result to
$\gr_\mathfrak{m}(R)$
the hypothesis of the Cohen-Macaulayness
is indispensable, that is
$$
\ap(S)\ \mbox{is} \ \gamma-\mbox{rectangular } \
  \not\Rightarrow \ \gr_\mathfrak{m}(R)\
\mbox{is Complete Intersection} .
$$
Indeed, let $S=\langle 6,7,15 \rangle $. Here $\ap(S)$ is
$\beta$-rectangular (hence $\gamma$-rectangular), since $f=23, \,
\beta_2=2, \beta_3=1$ and $f+m=\beta_2 g_2 + \beta_3 g_3$. It
follows that $\overline G$ is Complete Intersection and
$J=(x_3^2,x_2^3)$. However, in this case, $G$ is not Complete
Intersection, since $\gr_\mathfrak{m}(R)$ is not Cohen-Macaulay,
because $s=3$ and  $r=6$. It is not difficult to compute that
$I=(x_3^2-x_1^5, x_2^3-x_1x_3)$ ($S$ is symmetric, hence $R$ is
Gorenstein that, if the embedding dimension is $3$, implies Complete
Intersection) and that $I^*=(x_3^2, x_1x_3, x_2^6)$.
\end{rem}

\begin{rem}
We note that:
\begin{center}
$\gr_\mathfrak{m}(R)$ is Gorenstein and $R$ is Complete
Intersection $\not\Rightarrow$ \\ $\gr_\mathfrak{m}(R)$ is
Complete Intersection.
\end{center}
In fact, let $S=\langle 16,18,21,27 \rangle$; we have $\ap(S)= \{
0, 18, 21, 27, 36, 39, 42, 45, \linebreak 54, 57, 60, 63, 72, 78,
81, 99 \}$. Let us compute the integer $\gamma_3$: $2\cdot 21=42
\in \ap(S)$ and it has a unique representation, while $3\cdot 21=
2 \cdot 18+27$ are two maximal representations of $63$; hence
$\gamma_3=2$. The multiplicity of $S$ is $m=16$ and $\gamma_3+1=3$
does not divide $m$; therefore, by Theorem \ref{charact}, $\ap(S)$
is not $\gamma$-rectangular (hence $\gr_\mathfrak{m}(R)$ is not
Complete Intersection).

On the other hand, it is not difficult to check that $S$ is
symmetric, $M$-pure and that $\ord(99)=5=r$; hence, by Theorem
\ref{Bry} (3), $\gr_\mathfrak{m}(R)$ is Gorenstein. Finally, $R$
is Complete Intersection, since $I=(x_1^3-x_3x_4, x_2^3-x_4^2,
x_2^2x_4-x_3^3)$.
\end{rem}

\begin{rem}\label{Jmon}
If we substitute the condition ``$\gamma$-rectangular Ap\'ery set"
with the condition ``$\beta$-rectangular Ap\'ery set" in the
previous Theorem \ref{7} we obtain not only that
$\gr_\mathfrak{m}(R)$ is Complete Intersection, but also that $J$
is monomial. However this fact does not implies that the defining
ideal $I^*$ of $\gr_\mathfrak{m}(R)$ is monomial. For example, let
us consider the semigroup of the Example \ref{beta}: $S=\langle
12,14,16,23 \rangle$; its Ap\'ery set is $\beta$-rectangular and
$r=s=4$, hence $\gr_\mathfrak{m}(R)$ is Complete Intersection. In this case
the defining ideals are the following:
$I=(x_2^2-x_1x_3,x_4^2-x_2x_3^2,x_3^3-x_1^4)$,
$I^*=(x_2^2-x_1x_3,x_4^2,x_3^3)$ and $J=(x_2^2,x_3^3,x_4^2)$.
%
%
%
\end{rem}

As a corollary to Theorem \ref{7}, we get the following corollary,
that can be also obtained easily by B\'ezout's theorem applied to
the algebraic variety defined by $G$. Given a positive integer
$x$, we call $\ell(x)$ the length of its unique factorization i.e.
the number of (possibly equal) prime factors of $x$.

\begin{cor}
Let $\gr_\mathfrak{m}(R)$ be Complete Intersection, then we have:

\begin{enumerate}
     \item[(1)]
     $\nu \leq \ell(m) +1$;
     \item[(2)]
     if $m$ is a prime number then $\nu=2$.
\end{enumerate}
\end{cor}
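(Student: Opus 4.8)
The plan is to reduce everything to the numerical characterization of $\gamma$-rectangular Apéry sets and then run an elementary counting argument on prime factors. First I would invoke Theorem \ref{7}: since $\gr_\mathfrak{m}(R)$ is Complete Intersection, $\ap(S)$ is $\gamma$-rectangular, and by Theorem \ref{charact} (iii) this is equivalent to the factorization $m=\prod_{i=2}^{\nu}(\gamma_i+1)$. Both parts of the corollary will then follow by reading off information from this product, so the entire argument hinges on controlling the sizes of the factors $\gamma_i+1$.

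The key step is to observe that every factor in this product is at least $2$, i.e. that $\gamma_i\geq 1$ for each $i=2,\dots,\nu$. To see this I would argue that each generator $g_i$ (with $i\geq 2$) lies in $\ap(S)$: indeed $g_i-m\notin S$, since otherwise $g_i$ would fail to be a minimal generator. Moreover $\ord(g_i)=1$ because $g_i\in M\setminus 2M$, and the representation $g_i=1\cdot g_i$ is the unique maximal one, as any maximal representation of $g_i$ consists of exactly $\ord(g_i)=1$ summand. Hence $g_i$ satisfies the defining conditions for $\gamma_i$ with $h=1$, giving $\gamma_i\geq 1$. I expect this verification to be the main (though modest) obstacle, since it is the only point where the actual structure of the semigroup enters; everything else is pure arithmetic on the factorization of $m$.

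With each factor $\gamma_i+1\geq 2$ in hand, I would finish as follows. For (1), recall that $\ell$ is additive on products, so $\ell(m)=\sum_{i=2}^{\nu}\ell(\gamma_i+1)$; since each summand is at least $1$ (every $\gamma_i+1\geq 2$ contributes at least one prime factor), this gives $\ell(m)\geq \nu-1$, that is $\nu\leq \ell(m)+1$. For (2), if $m$ is prime then $\ell(m)=1$, so part (1) immediately yields $\nu\leq 2$; on the other hand a prime cannot be written as a product of $\nu-1$ integers each $\geq 2$ unless $\nu-1=1$, forcing $\nu=2$. One could alternatively secure the lower bound by noting that $\nu=1$ would force $m=1$, contradicting primality of $m$.
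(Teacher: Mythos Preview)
Your proof is correct and follows essentially the same approach as the paper: invoke Theorem \ref{7} and Theorem \ref{charact} to obtain $m=\prod_{i=2}^{\nu}(\gamma_i+1)$, note that each factor is at least $2$, and read off both conclusions. The only difference is that you spell out in detail why $\gamma_i\geq 1$, whereas the paper simply asserts it; your justification is correct and a welcome addition.
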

\begin{proof}

(1) By Theorem \ref{7}, $\ap(S)$ is $\gamma$-rectangular. Thus we have
$m=|\ap(S)|=\prod_{i=2}^{\nu} (\gamma_i+1)$ and the thesis follows as
$\gamma_i\geq 1$.
\\
(2) If $m$ is prime, we get $\nu \leq \ell(m)+1= 2$ by (1);
but $\nu > 1$, otherwise $m=1$.
\end{proof}

Using our method, we also obtain an alternative proof of a result
from \cite{BF}:

\begin{cor}\label{7a}
Let $n_{\nu} < \cdots < n_1$ be pairwise relatively prime positive integers, $N= \prod_{i=1}^{\nu} n_i$.
Let $g_i=\frac{N}{n_i} $ and $S=\langle g_1, g_2, \ldots, g_{\nu} \rangle$.
Then $\gr_\mathfrak{m}(R)$ is Complete Intersection.
\end{cor}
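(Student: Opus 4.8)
The plan is to verify condition (ii) of Theorem \ref{7}: that $\ap(S)$ is $\gamma$-rectangular and that $\gr_{\mathfrak m}(R)$ is Cohen--Macaulay. Throughout I would exploit the single structural identity underlying the construction, namely
$$ n_i g_i = N = n_1 g_1 \qquad (i=2,\dots,\nu), $$
together with the numerical facts that $g_1<g_2<\dots<g_\nu$ (since $n_1>\dots>n_\nu$), that $m=g_1=\prod_{i=2}^\nu n_i$, and that we may assume $n_i\ge 2$ for all $i$ (otherwise some $g_i$ would be redundant and the $g_i$ would not minimally generate $S$). The recurring device is to take an arbitrary representation $\sum_i c_i g_i$ of an element of $S$ and to normalize it by reducing each coefficient $c_i$ (for $i\ge 2$) modulo $n_i$, replacing $n_i$ copies of $g_i$ by $n_1$ copies of $g_1$ via the identity above.

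For the $\gamma$-rectangular part, I would first show that $\ap(S)$ is exactly the ``box'' $\{\sum_{i=2}^\nu \lambda_i g_i : 0\le \lambda_i\le n_i-1\}$. Since $n_k\mid g_i$ for $i\ne k$ while $\gcd(n_k,g_k)=1$, reducing a relation $\sum_{i\ge 2}\lambda_i g_i\equiv\sum_{i\ge 2}\mu_i g_i$ modulo each $n_k$ forces $\lambda_k=\mu_k$; hence the $\prod_{i=2}^\nu n_i = m$ box elements are pairwise incongruent modulo $m$ and form a complete set of residues. To see that each box element $w_\lambda$ lies in $\ap(S)$, i.e. that $w_\lambda-m\notin S$, I would assume a representation $w_\lambda-m=\sum_i c_i g_i$, rewrite $w_\lambda$ with a strictly positive coefficient on $g_1$, and then normalize modulo the $n_i$ as above; this returns a box element congruent to $w_\lambda$, which by incongruence must be $w_\lambda$ itself, forcing the $g_1$-coefficient to vanish, a contradiction. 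Comparing cardinalities then gives $\ap(S)=\{w_\lambda\}$. From this I read off $\gamma_i=\beta_i=n_i-1$ (the upper bound coming from $n_i g_i=n_1 g_1\notin\ap(S)$, since $n_i g_i-m=(n_1-1)m\in S$), so $m=\prod_{i=2}^\nu(\gamma_i+1)$ and Theorem \ref{charact} yields $\gamma$-rectangularity; in fact Theorem \ref{d} shows $\ap(S)$ is even $\beta$-rectangular.

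For Cohen--Macaulayness I would use Remark \ref{4}: it suffices that $(t^m)^*$ be a nonzerodivisor in $\gr_{\mathfrak m}(R)$, equivalently that $\ord(qm+\omega)=q+\ord(\omega)$ for every $\omega\in\ap(S)$ and every $q\ge 0$. Writing $\omega=\sum_{i\ge 2}\lambda_i g_i$ in its box form, the representation $q g_1+\sum_{i\ge 2}\lambda_i g_i$ gives the lower bound $\ord(qm+\omega)\ge q+\sum_{i\ge2}\lambda_i$. For the reverse inequality I would take an arbitrary representation $qm+\omega=\sum_i c_i g_i$ and normalize modulo the $n_i$; the reduced tuple is again a box element, hence must coincide with $\omega$, and keeping track of the total number of summands shows that the excess over $q+\ord(\omega)$ equals $\sum_{i\ge 2} q_i(n_i-n_1)$, where $q_i=\lfloor c_i/n_i\rfloor$. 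Since $n_i<n_1$ this quantity is $\le 0$, so no representation uses more than $q+\ord(\omega)$ summands; thus equality holds and $\gr_{\mathfrak m}(R)$ is Cohen--Macaulay. (Alternatively, since Corollary \ref{joy} already gives that $S$ is $M$-pure, one could instead verify $r=s$ and invoke Theorem \ref{Bry}.) With both conditions established, Theorem \ref{7} gives that $\gr_{\mathfrak m}(R)$ is Complete Intersection.

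The main obstacle, and the place where the hypothesis genuinely enters, is controlling \emph{all} representations of an element at once: both for pinning down $\ap(S)$ and, above all, for the order computation behind Cohen--Macaulayness. Both are handled by the same reduction-modulo-$n_i$ normal form, but the Cohen--Macaulay step is the delicate one. It is exactly the strict inequality $n_i<n_1$ (equivalently $g_i>g_1$) that prevents ``carrying'' coefficients into the multiplicity generator $g_1$ from ever increasing the number of summands, and this is what forces the box representation to be the maximal one.
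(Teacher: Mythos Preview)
Your proof is correct and, like the paper's, reduces the claim to Theorem \ref{7}(ii); but the two arguments differ substantially in how they verify the two hypotheses.

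For $\gamma$-rectangularity the paper uses a one-line squeeze: from $n_ig_i=n_1g_1\notin\ap(S)$ one gets $\gamma_i\le n_i-1$, hence $\prod_{i\ge2}(\gamma_i+1)\le\prod_{i\ge2}n_i=m$, while the general inclusion $\ap(S)\subseteq\Gamma$ gives $m=|\ap(S)|\le|\Gamma|\le\prod(\gamma_i+1)$; equality throughout and Theorem \ref{charact} finish it. You instead compute $\ap(S)$ explicitly as the box and show each Ap\'ery element has a \emph{unique} representation, which in fact yields more: your argument shows $\alpha_i=\beta_i=\gamma_i=n_i-1$, so $\ap(S)$ is even $\alpha$-rectangular. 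For Cohen--Macaulayness the paper simply invokes \cite[Proposition 3.6]{BF}, whereas you supply a direct, self-contained proof via the normal-form reduction and the inequality $n_i<n_1$. The paper's route is shorter and exploits existing literature; yours is longer but elementary, independent of \cite{BF}, and makes transparent exactly where the ordering $n_\nu<\dots<n_1$ is used (namely, only in the Cohen--Macaulay step, to ensure that carrying into $g_1$ never increases the length of a representation).
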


\begin{proof}
As proved in \cite[Proposition 3.6]{BF}, $\gr_\mathfrak{m}(R)$ is Cohen-Macaulay;
 we only need to prove that $\ap(S)$ is $\gamma$-rectangular.
We easily have $\gamma_i\leq n_i-1$ and hence $\prod_{i=2}^{\nu}
(\gamma_i+1)\leq \prod_{i=2}^{\nu} n_i=g_1=m$.  Since $m=|\ap(S)|
\leq |\Gamma|\leq \prod_{i=2}^{\nu} (\gamma_i+1)$, the equality
$m= \prod_{i=2}^{\nu} (\gamma_i+1)$ holds and, by Theorem
\ref{charact}, we get the thesis.
\end{proof}

We finish the paper studying the case $\nu(S)=3$. By Remarks
\ref{remarks} (3), it follows immediately that $\ap(S)$ is
$\gamma$-rectangular if and only if it is $\beta$-rectangular.
However, we can prove something more.

\begin{thm}\label{3gen}
Let $S= \langle g_1, g_2, g_3 \rangle$ be a three-generated
semigroup (with $g_1 < g_2 < g_3$). The following conditions are
equivalent:
\begin{enumerate}
\item[(i)] $\ap(S)$ is $\beta$-rectangular;

\item[(ii)] $\ap(S)$ is $\gamma$-rectangular;

\item[(iii)] $S$ is $M$-pure symmetric.
\end{enumerate}
\end{thm}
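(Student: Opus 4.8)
The plan is to close the cycle of implications (i) $\Rightarrow$ (ii) $\Rightarrow$ (iii) $\Rightarrow$ (i), exploiting that the first two arrows are already available in full generality from the previous section. Indeed, (i) $\Rightarrow$ (ii) is exactly Corollary \ref{work} (a $\beta$-rectangular Ap\'ery set is always $\gamma$-rectangular), and (ii) $\Rightarrow$ (iii) is exactly Corollary \ref{joy} (a $\gamma$-rectangular Ap\'ery set forces $S$ to be $M$-pure symmetric). Thus the only implication that genuinely uses the hypothesis $\nu=3$ is (iii) $\Rightarrow$ (i).

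To prove (iii) $\Rightarrow$ (i), I would reduce to Theorem \ref{d}, whose condition (iii) characterizes $\beta$-rectangularity as ``$S$ is $M$-pure, symmetric and $\ap(S)$ is of unique maximal expression''. Since in our situation $S$ is assumed to be $M$-pure symmetric, it suffices to show that, when $\nu=3$, the Ap\'ery set is \emph{automatically} of unique maximal expression. This is the heart of the matter, and the step I would isolate first.

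The key observation is that any representation of an element $\omega\in\ap(S)$ involves only $g_2$ and $g_3$, since $g_1$ never appears in a representation of an Ap\'ery element. Hence a maximal representation $\omega=\lambda_2 g_2+\lambda_3 g_3$ is a solution of the linear system $\lambda_2 g_2+\lambda_3 g_3=\omega$, $\lambda_2+\lambda_3=\ord(\omega)$, whose coefficient matrix $\left(\begin{smallmatrix} g_2 & g_3\\ 1 & 1\end{smallmatrix}\right)$ has determinant $g_2-g_3\neq 0$. Therefore the pair $(\lambda_2,\lambda_3)$ is uniquely determined by the value $\omega$ together with the order $\ord(\omega)$, so $\omega$ has a unique maximal representation. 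As this holds for every $\omega\in\ap(S)$, the set $\ap(S)$ is of unique maximal expression, and Theorem \ref{d}, (iii) $\Rightarrow$ (i), then yields that $\ap(S)$ is $\beta$-rectangular, closing the cycle.

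I do not expect a genuine obstacle: the only non-formal content is the two-generator injectivity just described, which is elementary, so the difficulty lies entirely in correctly routing the already-proved general facts. As an aside, the equivalence (i) $\Leftrightarrow$ (ii) can also be obtained directly, bypassing the cycle: by Remarks \ref{remarks} (3) the extremal indices satisfy $\rho_2=\rho_\nu=0$, i.e.\ $\beta_2=\gamma_2$ and $\beta_\nu=\gamma_\nu$; for $\nu=3$ these exhaust all indices, so $\beta_i=\gamma_i$ for $i=2,3$, whence $B=\Gamma$ and the two rectangularity notions literally coincide.
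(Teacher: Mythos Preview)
Your proof is correct and follows essentially the same route as the paper: Corollary \ref{work} and Corollary \ref{joy} dispose of (i) $\Rightarrow$ (ii) $\Rightarrow$ (iii), and for (iii) $\Rightarrow$ (i) both you and the paper exploit that two maximal representations $\lambda_2 g_2+\lambda_3 g_3=\mu_2 g_2+\mu_3 g_3$ with equal coefficient sums must coincide because $g_2\neq g_3$. The only cosmetic difference is that the paper checks this uniqueness just for $f+m$ and then invokes Theorem \ref{d}\,(ii) (unique $\preceq_M$-maximal element with unique maximal representation), whereas you check it for every $\omega\in\ap(S)$ and invoke Theorem \ref{d}\,(iii); the underlying linear-algebra observation is identical.
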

\begin{proof}
By Corollary \ref{work} and Corollary \ref{joy} we only need to
show the implication $(iii) \Rightarrow (i)$. Assume that $S$ is
$M$-pure symmetric, that is $\omega \preceq_M f+m$ for each
$\omega \in \ap(S)$. If we have two maximal representations of
$f+m$:
\[ f+m= \lambda_2 g_2 + \lambda_3 g_3 = \mu_2 g_2 + \mu_3 g_3,\qquad \mbox{ with } \lambda_2 + \lambda_3 = \mu_2 + \mu_3\]
then it follows that
\[ (\lambda_2 - \mu_2) g_2= (\mu_3-\lambda_3) g_3, \qquad \mbox{ with } \lambda_2 - \mu_2 = \mu_3-\lambda_3\]
and, since $g_2<g_3$, $\lambda_2-\mu_2=\lambda_3-\mu_3=0$. Thus
$f+m$ has a unique maximal representation and the thesis follows
by Theorem \ref{d} (iii).
\end{proof}

As a consequence of the last result, we get the well known fact
that Goresteinness and Complete Intersection are equivalent in codimension
two
 (i.e., in our hypotheses, in embedding dimension three).

\begin{cor}
Let $R$ be a numerical semigroup ring with $\nu(R)=3$. If
$\gr_\mathfrak{m}(R)$ is Gorenstein, then it is Complete
Intersection (and the ideal $J$ is monomial).
\end{cor}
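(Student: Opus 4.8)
The plan is to chain together the characterizations already established, exploiting the special feature of embedding dimension three recorded in Theorem \ref{3gen}. The starting point is the hypothesis that $\gr_\mathfrak{m}(R)$ is Gorenstein. By Theorem \ref{Bry} (3) this forces $S$ to be $M$-pure symmetric (together with $s=r$). The crucial simplification available when $\nu = 3$ is Theorem \ref{3gen}, which asserts that for a three-generated semigroup the three conditions ``$\ap(S)$ $\beta$-rectangular'', ``$\ap(S)$ $\gamma$-rectangular'' and ``$S$ $M$-pure symmetric'' are all equivalent. I would therefore invoke the implication (iii) $\Rightarrow$ (i) of Theorem \ref{3gen} to conclude that $\ap(S)$ is in fact $\beta$-rectangular, and hence $\gamma$-rectangular by Corollary \ref{work}.

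With $\ap(S)$ now known to be $\gamma$-rectangular and $\gr_\mathfrak{m}(R)$ Gorenstein by hypothesis, the implication (v) $\Rightarrow$ (i) of Theorem \ref{7} immediately yields that $\gr_\mathfrak{m}(R)$ is Complete Intersection, which settles the first assertion.

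For the second assertion, that the defining ideal $J$ is monomial, I would use the \emph{stronger} $\beta$-rectangularity obtained above rather than mere $\gamma$-rectangularity. By Corollary \ref{beta2}, $\beta$-rectangularity of $\ap(S)$ guarantees $\beta_i = \gamma_i$ for every index $i$. Substituting this into the structural description of $J$ given in Lemma \ref{JCI}, each coefficient $\rho_i$ must vanish, since by definition $\rho_i = 0$ exactly when $\beta_i = \gamma_i$; consequently $J = (x_2^{\gamma_2+1}, x_3^{\gamma_3+1})$ is generated by pure powers of the variables. Equivalently, one may simply cite Remark \ref{Jmon}, where this monomiality under the $\beta$-rectangular hypothesis is recorded.

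There is no genuine obstacle here: the argument is a direct composition of previously proved statements. The only point requiring attention is the order of translation. The Gorenstein hypothesis is phrased ring-theoretically and must first be converted into the combinatorial condition that $S$ is $M$-pure symmetric before Theorem \ref{3gen} can be applied to upgrade it to $\beta$-rectangularity. It is precisely this upgrade, special to the case $\nu = 3$, that simultaneously powers the Complete Intersection conclusion (via the $\gamma$-rectangular consequence) and forces $J$ to be monomial (via the $\beta$-rectangular consequence and Corollary \ref{beta2}).
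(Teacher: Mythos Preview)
Your proof is correct and follows essentially the same route as the paper, which simply cites Theorems \ref{Bry}, \ref{3gen} and \ref{7}; you have merely spelled out the chain of implications in detail. The additional argument for the monomiality of $J$ via Corollary \ref{beta2} and Lemma \ref{JCI} (or Remark \ref{Jmon}) is exactly what the paper leaves implicit in the parenthetical assertion.
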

\begin{proof}
Apply Theorems \ref{Bry}, \ref{3gen} and \ref{7}.
\end{proof}

\textbf{Acknowledgments.} The authors are grateful to Yi Huang
Shen for some helpful remarks on a preliminary version of the
paper that allowed to define properly the $\gamma$-rectangularity
and to the referee for several suggestions and comments that
improved the paper.

\end{document}